\begin{document}
\newtheorem{theorem}{Theorem}
\newtheorem{proposition}[theorem]{Proposition}
\newtheorem{conjecture}[theorem]{Conjecture}
\def\theconjecture{\unskip}
\newtheorem{corollary}[theorem]{Corollary}
\newtheorem{lemma}[theorem]{Lemma}
\newtheorem{sublemma}[theorem]{Sublemma}
\newtheorem{observation}[theorem]{Observation}
\theoremstyle{definition}
\newtheorem{definition}{Definition}
\newtheorem{notation}[definition]{Notation}
\newtheorem{remark}[definition]{Remark}
\newtheorem{question}[definition]{Question}
\newtheorem{questions}[definition]{Questions}
\newtheorem{example}[definition]{Example}
\newtheorem{problem}[definition]{Problem}
\newtheorem{exercise}[definition]{Exercise}

\numberwithin{theorem}{section} \numberwithin{definition}{section}
\numberwithin{equation}{section}

\def\earrow{{\mathbf e}}
\def\rarrow{{\mathbf r}}
\def\uarrow{{\mathbf u}}
\def\varrow{{\mathbf V}}
\def\tpar{T_{\rm par}}
\def\apar{A_{\rm par}}

\def\reals{{\mathbb R}}
\def\torus{{\mathbb T}}
\def\heis{{\mathbb H}}
\def\integers{{\mathbb Z}}
\def\naturals{{\mathbb N}}
\def\complex{{\mathbb C}\/}
\def\distance{\operatorname{distance}\,}
\def\support{\operatorname{support}\,}
\def\dist{\operatorname{dist}\,}
\def\Span{\operatorname{span}\,}
\def\degree{\operatorname{degree}\,}
\def\kernel{\operatorname{kernel}\,}
\def\dim{\operatorname{dim}\,}
\def\codim{\operatorname{codim}}
\def\trace{\operatorname{trace\,}}
\def\Span{\operatorname{span}\,}
\def\dimension{\operatorname{dimension}\,}
\def\codimension{\operatorname{codimension}\,}
\def\nullspace{\scriptk}
\def\kernel{\operatorname{Ker}}
\def\ZZ{ {\mathbb Z} }
\def\p{\partial}
\def\rp{{ ^{-1} }}
\def\Re{\operatorname{Re\,} }
\def\Im{\operatorname{Im\,} }
\def\ov{\overline}
\def\eps{\varepsilon}
\def\lt{L^2}
\def\diver{\operatorname{div}}
\def\curl{\operatorname{curl}}
\def\etta{\eta}
\newcommand{\norm}[1]{ \|  #1 \|}
\def\expect{\mathbb E}
\def\bull{$\bullet$\ }

\def\xone{x_1}
\def\xtwo{x_2}
\def\xq{x_2+x_1^2}
\newcommand{\abr}[1]{ \langle  #1 \rangle}

\def\blue{\color{blue}}
\def\red{\color{red}}

\newcommand{\Norm}[1]{ \left\|  #1 \right\| }
\newcommand{\set}[1]{ \left\{ #1 \right\} }
\def\one{\mathbf 1}
\def\whole{\mathbf V}
\newcommand{\modulo}[2]{[#1]_{#2}}
\def \essinf{\mathop{\rm essinf}}
\def\scriptf{{\mathcal F}}
\def\scriptg{{\mathcal G}}
\def\scriptm{{\mathcal M}}
\def\scriptb{{\mathcal B}}
\def\scriptc{{\mathcal C}}
\def\scriptt{{\mathcal T}}
\def\scripti{{\mathcal I}}
\def\scripte{{\mathcal E}}
\def\scriptv{{\mathcal V}}
\def\scriptw{{\mathcal W}}
\def\scriptu{{\mathcal U}}
\def\scriptS{{\mathcal S}}
\def\scripta{{\mathcal A}}
\def\scriptr{{\mathcal R}}
\def\scripto{{\mathcal O}}
\def\scripth{{\mathcal H}}
\def\scriptd{{\mathcal D}}
\def\scriptl{{\mathcal L}}
\def\scriptn{{\mathcal N}}
\def\scriptp{{\mathcal P}}
\def\scriptk{{\mathcal K}}
\def\frakv{{\mathfrak V}}
\def\C{\mathbb{C}}
\def\R{\mathbb{R}}
\def\Rn{{\mathbb{R}^n}}
\def\Sn{{{S}^{n-1}}}
\def\M{\mathbb{M}}
\def\N{\mathbb{N}}
\def\Q{{\mathbb{Q}}}
\def\Z{\mathbb{Z}}
\def\F{\mathcal{F}}
\def\L{\mathcal{L}}
\def\S{\mathcal{S}}
\def\ch{\operatorname{ch}}
\def\supp{\operatorname{supp}}
\def\dist{\operatorname{dist}}
\def\essi{\operatornamewithlimits{ess\,inf}}
\def\esss{\operatornamewithlimits{ess\,sup}}
\begin{comment}
\def\scriptx{{\mathcal X}}
\def\scriptj{{\mathcal J}}
\def\scriptr{{\mathcal R}}
\def\scriptS{{\mathcal S}}
\def\scripta{{\mathcal A}}
\def\scriptk{{\mathcal K}}
\def\scriptp{{\mathcal P}}
\def\frakg{{\mathfrak g}}
\def\frakG{{\mathfrak G}}
\def\boldn{\mathbf N}
\end{comment}
\author{Mingming Cao}
\address{Mingming Cao \\
         School of Mathematical Sciences \\
         Beijing Normal University \\
         Laboratory of Mathematics and Complex Systems \\
         Ministry of Education \\
         Beijing 100875 \\
         People's Republic of China}
\email{m.cao@mail.bnu.edu.cn}

\author{Kangwei Li}
\address{Kangwei Li\\
        BCAM, Basque Center for Applied Mathematics, Mazarredo, 14. 48009
Bilbao, Basque Country, Spain}
\email{kli@bcamath.org}

\author{Qingying Xue}
\address{Qingying Xue\\
        School of Mathematical Sciences\\
        Beijing Normal University \\
        Laboratory of Mathematics and Complex Systems\\
        Ministry of Education\\
        Beijing 100875\\
        People's Republic of China}
\email{qyxue@bnu.edu.cn}

\thanks{M. Cao and Q. Xue were supported partly by NSFC
(No. 11471041 and 11671039), the Fundamental Research Funds for the Central Universities (NO. 2014KJJCA10) and NCET-13-0065. K. Li is
supported by the Basque Government through the BERC 2014-2017 program and by Spanish Ministry
of Economy and Competitiveness MINECO: BCAM Severo Ochoa excellence accreditation SEV-2013-
0323.\\ \indent Corresponding
author: Qingying Xue\indent Email: qyxue@bnu.edu.cn}

%Any opinions, findings, and conclusions
%or recommendations expressed in this paper are those of the author
%and do not necessarily reflect the views of the National Science Foundation.}
%s DMS-0401260 and
\keywords{Two-weight inequality; Littlewood-Paley $g_{\lambda}^*$-function; Pivotal condition; Random dyadic grids.}

\date{March  1, 2017.}
\title[Littlewood-Paley $g_{\lambda}^{*}$-function]{\textbf{A Characterization of Two-weight Norm Inequality for Littlewood-Paley} $g_{\lambda}^{*}$-Function }
\maketitle

\begin{abstract}
Let $n\ge 2$ and $g_{\lambda}^{*}$ be the well-known high dimensional Littlewood-Paley function which was defined and studied by E. M. Stein,
\[
g_{\lambda}^{*}(f)(x)
=\bigg(\iint_{\R^{n+1}_{+}} \Big(\frac{t}{t+|x-y|}\Big)^{n\lambda}
|\nabla P_tf(y,t)|^2 \frac{dy dt}{t^{n-1}}\bigg)^{1/2}, \ \quad \lambda > 1,
\]
where $P_tf(y,t)=p_t*f(y)$, $p_t(y)=t^{-n}p(y/t)$ and $p(x) = (1+|x|^2)^{-(n+1)/2}$,
$\nabla =(\frac{\partial}{\partial y_1},\ldots,\frac{\partial}{\partial y_n},\frac{\partial}{\partial t})$.
In this paper, we give a characterization of two-weight norm inequality for $g_{\lambda}^{*}$-function.
We show that, $\big\| g_{\lambda}^{*}(f \sigma) \big\|_{L^2(w)} \lesssim \big\| f \big\|_{L^2(\sigma)}$ if and only if the two-weight Muchenhoupt $A_2$ condition holds, and a testing condition holds :
\[
\sup_{Q : cubes \ in \ \Rn} \frac{1}{\sigma(Q)} \int_{\Rn} \iint_{\widehat{Q}}
\Big(\frac{t}{t+|x-y|}\Big)^{n\lambda}|\nabla P_t(\mathbf{1}_Q \sigma)(y,t)|^2  \frac{w dx dt}{t^{n-1}} dy < \infty,
\]
where $\widehat{Q}$ is the Carleson box over $Q$ and $(w, \sigma)$ is a pair of weights. We actually prove this characterization for $g_{\lambda}^{*}$-function associated with more general fractional Poisson kernel $p^\alpha(x) = (1+|x|^2)^{-{(n+\alpha)}/{2}}$. Moreover, the corresponding results for intrinsic $g_{\lambda}^*$-function are also presented.
\end{abstract}

%%%%%%%%%%%%%%%%%%%%%%%%%%%%%%%%%%%%%%%%%%%%%%%%%%%%%%%%%%

\section{Introduction}
The $g_{\lambda}^*$-function originated in the work of Littlewood and Paley \cite{LP} in the 1930's. It is a basic tool in the analysis of $L^p$ bounds for various linear operators. Later, the classical $g_{\lambda}^*$-function of higher dimension was first introduced and studied by Stein \cite{Stein1961} in 1961, a certain sublinear operator arises in Littlewood-Paley theory \cite{F}, \cite{Stein1970}. It plays important roles in Harmonic analysis and other fields. Let $n \geq 2$, we recall its definition as follows:
$$ g_{\lambda}^*(f)(x)=\bigg(\iint_{\R^{n+1}_{+}} \Big(\frac{t}{t+|x-y|}\Big)^{n\lambda}
|\nabla P_t f(y,t)|^2 \frac{dy dt}{t^{n-1}}\bigg)^{1/2},\quad\quad \lambda > 1$$
where $P_t f(y,t)=p_t*f(y)$, $p_t(y)=t^{-n}p(y/t)$, $p(y) = (1+|y|^2)^{-(n+1)/2}$ denotes the Poisson kernel and
$\nabla =(\frac{\partial}{\partial y_1},\ldots,\frac{\partial}{\partial y_n},\frac{\partial}{\partial t})$.
It is easy to show that $g_{\lambda}^*$ is an isometry on $L^2(\Rn)$. With much greater difficulty, it can be proved that for any $1< p < \infty$, $\big\| g_{\lambda}^*(f) \big\|_{L^p(\Rn)}$ and $\big\| f \big\|_{L^p(\Rn)}$ are equivalent norms \cite{Stein1961}. Moreover, in \cite{Stein1961}, Stein also proved that if $\lambda > 2$, then $g_{\lambda}^*$ is of weak type $(1,1)$, and is of strong type $(p,p)$ for $1 < p < \infty$.  In 1970, as a replacement of weak $(1,1)$ bounds for $1<\lambda<2$, Fefferman \cite{F} considered the end-point weak $(p,p)$ estimates of $g_{\lambda}^*$-function when $p>1$ and $\lambda=2/p$.

Recently, Lacey and the second named author \cite{LL} gave a characterization of two weight norm inequalities for the classical $g$-function and the corresponding intrinsic square function. Recall that the classical $g$-function is defined by
$$ g(f)(x)=\bigg(\int_{0}^{\infty} |\nabla P_t f(x,t)|^2 t dt \bigg)^{1/2}.$$
It was shown that the following two weight norm inequality for the classical Littlewood-Paley $g$-function for a pair of weights $(w, \sigma)$ on $\Rn$:
\begin{equation}\label{two weight g-function}
\big\| g(f \sigma) \big\|_{L^2(w)} \lesssim \big\| f \big\|_{L^2(\sigma)}
\end{equation}
holds if and only if $(w,\sigma)$ satisfies
\begin{equation}\label{A2 condition}
\mathscr{A}^2 := \sup_{Q} \frac{\sigma(Q)}{|Q|} \frac{w(Q)}{|Q|} < \infty;
\end{equation}
and the testing condition holds, uniformly over all cubes $Q \subset \Rn$,
\begin{equation}\label{testing condition-g}
 \iint_{\widehat{Q}} |\nabla P_t(\mathbf{1}_Q \sigma)(x,t)|^2 w dx \ t dt \lesssim \sigma(Q), \ \ \ \widehat{Q}=Q \times (0,\ell(Q)].
\end{equation}
The condition $(\ref{testing condition-g})$ is called the Sawyer testing condition, which can be traced back to \cite{Saywer}. It is known that Littlewood-Paley $g$-function is pointwise controlled by $g_\lambda^*$-function. Thus it is quite natural to ask if one can establish a characterization for the Littlewood-Paley $g_\lambda^*$-function. But the $g_\lambda^*$-function also involves additional difficulties since more integrals appear in the definition. One also needs to find the new suitable testing condition to replace condition $(\ref{testing condition-g})$.

In order to state our results, we first introduce the definition of the Littlewood-Paley $g_\lambda^*$-function with fractional Poisson kernels.

\begin{definition}\label{definition}
Let $\lambda > 1$, for any $x \in \Rn$, the Littlewood-Paley $g_\lambda^*$-function with fractional Poisson kernels is defined by
$$ g_{\lambda}^{*,\alpha}(f)(x)= \bigg(\iint_{\R^{n+1}_{+}} \Big(\frac{t}{t+|x-y|}\Big)^{n\lambda}
|\nabla P^\alpha_t f(y,t)|^2 \frac{dy dt}{t^{n-1}}\bigg)^{1/2},\ \ 0 < \alpha \leq 1,$$
where $P^\alpha_t f(y,t)=p^\alpha_t*f(y)$, $p^\alpha_t(y)=t^{-n}p^\alpha(y/t)$ and
$ p^\alpha(x) = (1+|x|^2)^{-(n+\alpha)/2}$.
\end{definition}
\begin{remark} If $\alpha=1,$ then $g_{\lambda}^{*,1}$ coincides with the classical Littlewood-Paley $g_{\lambda}^{*}$-function of higher dimension defined and studied by E. M. Stein
\cite{Stein1961} in 1961.
\end{remark}
Motivated by the above work, in this paper, we will focus on the characterization of the two weight inequality for the Littlewood-Paley $g_{\lambda}^{*}$-function.
\begin{equation}\label{t-w-i-g^star}
\big\| g_{\lambda}^{*,\alpha}(f \sigma) \big\|_{L^2(w)} \leq \mathscr{N} \big\| f \big\|_{L^2(\sigma)}.
\end{equation}
In addition, we introduce the corresponding testing condition:
\begin{equation}\label{testing condition-g^star}
\mathscr{B}^2 := \sup_{Q : \ cubes \ in \ \Rn} \frac{1}{\sigma(Q)} \iint_{\widehat{Q}} \int_{\Rn} \Big(\frac{t}{t+|x-y|}\Big)^{n\lambda}|\nabla P^\alpha_t(\mathbf{1}_Q \sigma)(y,t)|^2  dy
\frac{w dx dt}{t^{n-1}} < \infty .
\end{equation}

Here we formulate the main result of this paper as follows.
\begin{theorem}\label{Theorem g^star}
Let $\lambda > 2$, $ 0 < \alpha \leq \min\{1, n(\lambda - 2)/2 \}$ and $\sigma$, $w$ be two weights. Then the two weight inequality
$(\ref{t-w-i-g^star})$ holds if and only if the two weight condition $(\ref{A2 condition})$ and testing condition
$(\ref{testing condition-g^star})$ hold.
Moreover, $\mathscr{N} \simeq \mathscr{A} + \mathscr{B}$, where $\mathscr{N}$ is the best constant in the inequality $(\ref{t-w-i-g^star})$.
\end{theorem}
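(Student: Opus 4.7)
The implication $\mathscr{A}_2+\mathscr{B}\lesssim \mathscr{N}$ is the easy direction. Testing (\ref{t-w-i-g^star}) on $f=\mathbf{1}_Q$ gives $\mathscr{B}\lesssim \mathscr{N}$ directly from the definition (\ref{testing condition-g^star}). For the $A_2$ bound, one uses the non-degeneracy of $|\nabla p^\alpha_t|$ together with the factor $(t/(t+|x-y|))^{n\lambda}$ to obtain a pointwise lower bound $g_\lambda^{*,\alpha}(\mathbf{1}_Q\sigma)(x)\gtrsim \sigma(Q)/|Q|$ on a suitable annular region around $Q$; a dyadic-shell decomposition then recovers $\mathscr{A}_2\lesssim \mathscr{N}$.

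The main work is the converse $\mathscr{N}\lesssim \mathscr{A}_2+\mathscr{B}$. The plan is to adapt the Lacey--Li proof of (\ref{two weight g-function}) to accommodate the extra spatial integral in $g_\lambda^{*,\alpha}$. The first step is to linearize: by Fubini,
\begin{equation*}
\|g_\lambda^{*,\alpha}(f\sigma)\|_{L^2(w)}^2 \;=\; \iint_{\R^{n+1}_+} |\nabla P_t^\alpha(f\sigma)(y,t)|^2\,W(y,t)\,\frac{dy\,dt}{t^{n-1}},
\end{equation*}
where
\begin{equation*}
W(y,t):=\int_{\R^n}\Big(\tfrac{t}{t+|x-y|}\Big)^{n\lambda}\,w(x)\,dx.
\end{equation*}
A pointwise comparison of the form $W(y,t)\lesssim t^{n}\,(P_t^{\beta}w)(y)$, for an appropriate fractional order $\beta$ depending on $n,\lambda,\alpha$, reduces matters to a Carleson-type embedding on the upper half space for the measure $W(y,t)\,t^{1-n}\,dy\,dt$; both the testing hypothesis (\ref{testing condition-g^star}) and $\mathscr{A}_2$ can be transferred to this setting.

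With this reformulation in hand, I would invoke a random dyadic grid à la Nazarov--Treil--Volberg/Hyt\"onen and expand $f$ into its $\sigma$-Haar series $\{h_Q^\sigma\}$. Following Lacey--Li, a corona (principal cube) decomposition is built from the stopping values of $\langle|f|\rangle_Q^\sigma$, and the bilinear form $\langle g_\lambda^{*,\alpha}(f\sigma),\,gw\rangle$ is split into (i) an \emph{inside-corona} diagonal part controlled directly by $\mathscr{B}$ via the testing hypothesis; (ii) \emph{off-diagonal} pair terms whose kernels enjoy Poisson-type decay between the supports of distinct $\sigma$-Haar and $w$-Haar functions, to be summed using $\mathscr{A}_2$ combined with the upper-half-space Carleson embedding against $W$; and (iii) a \emph{paraproduct/stopping} term absorbed by the testing constant in the standard NTV fashion. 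The structural input driving these estimates is a \emph{pivotal} (or energy) condition tailored to $g_\lambda^{*,\alpha}$, which I would show to follow from $\mathscr{A}_2+\mathscr{B}$ by the Carleson-type bound on $W\,t^{1-n}\,dy\,dt$.

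The central obstacle, as I anticipate it, is the off-diagonal analysis. The spatial averaging in $W$ replaces the plain weight $w$ by a fractional Poisson-type average of $w$, which weakens the cube-separation decay available in the $g$-function case. The assumption $\alpha\le n(\lambda-2)/2$ enters precisely here: it calibrates the fractional order of the Poisson-type average of $w$ coming out of $W$ against the fractional order $\alpha$ of the Poisson kernel $p^\alpha$ acting on $\sigma$, in such a way that the resulting geometric sums over well-separated $\sigma$-Haar cubes still converge after the NTV goodness reduction. Verifying the pivotal condition uniformly in the random grid, under this fractional Poisson pairing, constitutes the technical heart of the argument; the remainder of the proof then proceeds by the same bookkeeping as in Lacey--Li, with $W(y,t)\,t^{1-n}$ playing the role of the half-space weight throughout.
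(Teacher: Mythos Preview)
Your overall architecture --- random dyadic grids, expansion of $f$ in $\sigma$-martingale differences, a stopping/corona tree, and a pivotal condition derived from $\mathscr{A}_2+\mathscr{B}$ --- matches the paper. Two points, however, deserve correction or comparison.

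\textbf{The bilinear form is a misstep.} You write ``the bilinear form $\langle g_\lambda^{*,\alpha}(f\sigma),gw\rangle$'' and refer to ``$w$-Haar functions''. But $g_\lambda^{*,\alpha}$ is a square function, not a linear operator, so there is no dual $g$ to expand. The paper (like Lacey--Li) works directly with $\|g_\lambda^{*,\alpha}(f\sigma)\|_{L^2(w)}^2$, decomposes $\R^{n+1}_+$ into Whitney boxes $W_R$, and expands only $f=\sum_Q\Delta_Q^\sigma f$. The resulting double sum is over pairs $(Q,R)$ with $Q$ a $\sigma$-martingale cube and $R$ a \emph{Whitney} cube; after a goodness reduction, these pairs are sorted by the relative size and distance of $Q$ and $R$ into four cases, and the deeply nested case $\ell(Q)>2^r\ell(R)$, $d(Q,R)\le \ell(R)^\gamma\ell(Q)^{1-\gamma}$ is then handled by the stopping tree and split into global/paraproduct/local pieces. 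Once you replace ``$w$-Haar'' by ``Whitney box'', your (i)--(iii) line up with those three pieces.

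\textbf{The $W$-linearization versus the paper's $y$-splitting.} Your reduction to a half-space measure $W(y,t)t^{1-n}\,dy\,dt$ is a legitimate repackaging, but it is not what the paper does. The paper keeps $w$ on $\R^n$ and, for each Whitney box $W_R$, bounds the inner $y$-integral directly: in Elementary Estimates~1--2 and Lemma~\ref{lemma-R-K-S} the integral over $y$ is split into $|y|\le\tfrac12 d(Q,R)$ (where the kernel sees the separation and one gets the Poisson factor $\ell(R)^\alpha/(\ell(R)+d(Q,R))^{n+\alpha}$) and $|y|>\tfrac12 d(Q,R)$ (handled by Young's inequality and $\|\psi_t\|_{L^2}\sim t^{-n/2}$, which yields $\ell(R)^{n\lambda/2-n}/(\ell(R)+d(Q,R))^{n\lambda/2}$). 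The constraint $\alpha\le n(\lambda-2)/2$ is exactly what makes the second expression dominated by the first. Your heuristic via a fractional Poisson average of $w$ encodes the same inequality ($n(\lambda-1)\ge n+2\alpha$), but the paper's route is a concrete kernel computation rather than an abstract Carleson embedding.

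Finally, your necessity argument is more elaborate than needed: the paper obtains $\mathscr{A}_2\lesssim\mathscr{N}$ with no annular shells, simply by restricting the $g_\lambda^{*,\alpha}(\mathbf 1_Q\sigma)(x)$ integral to $x,y\in Q$ and $t\in[2\ell(Q),3\ell(Q)]$, where $\partial_t p_t^\alpha$ has a definite sign and size $\gtrsim t^{-n-1}$, giving $g_\lambda^{*,\alpha}(\mathbf 1_Q\sigma)(x)\gtrsim \sigma(Q)/|Q|$ on $Q$.
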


\begin{remark}
The characterization of the two weight inequality for the classical Littlewood-Paley $g_{\lambda}^*$-function is contained in Theorem $\ref{Theorem g^star}$ ($\alpha=1$, $\lambda \geq 2(1+{1}/{n})$). Actually, when $\lambda \geq 2(1+{1}/{n})$, we have $ 0 < \alpha \leq 1$. It not only includes  the classical case, but also extends to the case for $0 < \alpha < 1$. Another notable fact is that we are able to improve the result of \cite{LL} with the fractional Poisson kernel $p^\alpha$, $ 0 < \alpha \leq 1$.
\end{remark}

To state another main result, we begin with one more definition.
\begin{definition}\label{definition1}For $0 < \alpha \leq 1$, let $\mathcal{C}_{\alpha}$ be the family of functions $\varphi$ satisfying
$supp \ \varphi \subset \{ x \in \Rn; |x| \leq 1\}$, $\int_{\Rn} \varphi(x) dx = 0$, and such that
$|\varphi(x)-\varphi(x')| \leq |x-x'|^{\alpha}$, for all $x$, $x' \in \Rn$. If $f \in L^1_{loc}(\Rn)$ and $(y,t) \in \R_+^{n+1}$, we define
$$
A_{\alpha}f(y,t)=\sup_{\varphi \in \mathcal{C}_{\alpha}} |f*\varphi_t(y)|, \ \text{where}\ \varphi_t(x)=t^{-n} \varphi(x/t).
$$
Then the $intrinsic$ $g_{\lambda}^*$-function is defined by setting, for all $x \in \Rn$,
$$
g_{\lambda,\alpha}^*(f)(x)=\bigg(\iint_{\R^{n+1}_{+}} \Big(\frac{t}{t+|x-y|}\Big)^{n\lambda}
[A_{\alpha}f(y,t)]^2 \frac{dy dt}{t^{n+1}}\bigg)^{1/2}.
$$
\end{definition}
For the intrinsic $g_{\lambda,\alpha}^*$ function, we have the following result.

\begin{theorem}\label{Theorem g^star-alpha}
Let $\lambda > 2$, $ 0 < \alpha \leq \min\{1, n(\lambda - 2)/2 \}$ and $\sigma$, $w$ be two weights.
Then the two weight inequality
$$\big\| g_{\lambda,\alpha}^*(f \sigma) \big\|_{L^2(w)} \leq \mathscr{N}_\alpha \big\| f \big\|_{L^2(\sigma)}$$
holds if and only if
\begin{enumerate}
\item [(i)] $(w,\sigma)$ satisfies the $A_2$ condition $(\ref{A2 condition})$;
\item [(ii)] the testing condition holds :
$$
\mathscr{B}_{\alpha}^2
:= \sup_{Q:\ cubes \ in \ \Rn} \frac{1}{\sigma(Q)} \iint_{\widehat{Q}} \int_{\Rn} \Big(\frac{t}{t+|x-y|}\Big)^{n\lambda}[A_{\alpha}(\mathbf{1}_Q \sigma)(y,t)]^2  dy
\frac{w dx dt}{t^{n+1}} < \infty.
$$
\end{enumerate}
Moreover, the best constants satisfy $\mathscr{N}_{\alpha} \simeq \mathscr{A} + \mathscr{B}_{\alpha}$.
\end{theorem}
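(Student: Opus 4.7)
The plan is to prove Theorem \ref{Theorem g^star-alpha} by running the argument used for Theorem \ref{Theorem g^star} essentially verbatim, with the role of the normalized gradient of the fractional Poisson extension replaced by the intrinsic quantity $A_\alpha$. The structural reason this works is that every $\varphi \in \mathcal{C}_\alpha$ has (i) support in the unit ball, (ii) vanishing integral, and (iii) Hölder regularity of order $\alpha$, so that $\varphi_t$ satisfies the size, smoothness and cancellation bounds
\begin{equation*}
|\varphi_t(x)| \lesssim t^{-n}\mathbf{1}_{\{|x| \leq t\}}, \quad |\varphi_t(x)-\varphi_t(x')| \lesssim t^{-n-\alpha}|x-x'|^\alpha, \quad \int_{\Rn} \varphi_t = 0,
\end{equation*}
which are exactly the bounds enjoyed (up to constants) by the rescaled derivatives $t\,\partial_{y_i}p_t^\alpha$ and $t\,\partial_t p_t^\alpha$ driving the proof of Theorem \ref{Theorem g^star}. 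Taking a supremum over $\varphi \in \mathcal{C}_\alpha$ then yields the analogues, for $A_\alpha(\mathbf{1}_E)(y,t)$, $A_\alpha(h_I)(y,t)$ with a zero-mean $h_I$ localized to $I$, and $A_\alpha(\mathbf{1}_Q\sigma)(y,t)$, of every pointwise estimate used in Theorem \ref{Theorem g^star}. The change of measure from $dy\,dt/t^{n-1}$ to $dy\,dt/t^{n+1}$ simply compensates for the missing $t^2$ that one would pick up from $t|\nabla P_t^\alpha f|$.

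The necessity direction is easy: $\mathscr{B}_\alpha \lesssim \mathscr{N}_\alpha$ comes from plugging $f=\mathbf{1}_Q$ into the two-weight inequality, while $\mathscr{A}_2 \lesssim \mathscr{N}_\alpha$ follows by fixing a single nontrivial $\varphi_0 \in \mathcal{C}_\alpha$, noting that the restriction of $g_{\lambda,\alpha}^*$ to this single $\varphi_0$ dominates a classical square function of convolution type, and using the standard two-weight lower bound that such a square function forces $\mathscr{A}_2$.

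For sufficiency, I would dualize $\|g_{\lambda,\alpha}^*(f\sigma)\|_{L^2(w)}^2$ into a bilinear form, introduce the Nazarov–Treil–Volberg random dyadic grid, and expand $f = \sum_I \Delta_I^\sigma f$ into $\sigma$-martingale differences. The resulting sum splits, as in Theorem \ref{Theorem g^star}, into three pieces: a diagonal term at matched scales/locations controlled by the testing hypothesis $\mathscr{B}_\alpha$; a nearby off-diagonal term handled by the good-cube reduction together with the Hölder-$\alpha$ cancellation estimate for $A_\alpha(h_I)$ and a pivotal/$\mathscr{A}_2$ bound; and a distant off-diagonal term where the factor $(t/(t+|x-y|))^{n\lambda}$ with $\lambda>2$ and the hypothesis $\alpha \leq n(\lambda-2)/2$ yield a geometric gain in the scale separation that makes the sum converge. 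The hypothesis on $\alpha$ enters precisely here, to balance the Hölder smoothness index of $\mathcal{C}_\alpha$ against the decay exponent provided by the aperture weight.

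I expect the main technical obstacle to be the nearby off-diagonal term: the two-weight character of the problem forces one to combine the $\alpha$-Hölder cancellation from functions in $\mathcal{C}_\alpha$ with pivotal-type estimates under the $\mathscr{A}_2$ hypothesis and good-cube probabilistic averaging, exactly as in Theorem \ref{Theorem g^star}. Everything else is bookkeeping: once the kernel dictionary between $t|\nabla P_t^\alpha|$ and $A_\alpha$ is set up, each pointwise estimate, each geometric summation in $j$ (dyadic distance between cubes) and each extraction of $\mathscr{A}_2$ or $\mathscr{B}_\alpha$ transfers with the same exponents, and one reads off $\mathscr{N}_\alpha \simeq \mathscr{A}_2 + \mathscr{B}_\alpha$ from the final constants.
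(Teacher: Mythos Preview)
Your plan is essentially the paper's own: the authors simply write ``the main steps in the proof of Theorem~\ref{Theorem g^star-alpha} are the same as Theorem~\ref{Theorem g^star}, we omit the proof,'' and your kernel dictionary (size, H\"older, mean zero for every $\varphi\in\mathcal{C}_\alpha$, uniformly) is exactly what makes the sufficiency argument (Elementary Estimates~1--2, Lemma~\ref{lemma-R-K-S}, and the four-case split in Section~\ref{Sec-Sufficiency}) transfer verbatim with $A_\alpha$ replacing $t|\nabla P^\alpha_t|$.

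There is, however, one step that does \emph{not} transfer through your upper-bound dictionary, and the paper handles it by a different device that you do not mention. Proposition~\ref{best constant} (the pivotal bound $\mathscr{P}\lesssim\mathscr{A}_2+\mathscr{B}$) relies on the \emph{lower} bound
\[
\mathcal{P}_\alpha(K,\mathbf{1}_{I^0\setminus CK}\,\sigma)\;\lesssim\; t\,\partial_t P^\alpha_t(\mathbf{1}_{I^0\setminus CK}\,\sigma)(y,t),\qquad y\in K,\ t\sim\ell(K),
\]
which uses the explicit, non-compactly supported tail of $\partial_t p^\alpha_t$. For $\varphi\in\mathcal{C}_\alpha$ the analogue is vacuous: since $\operatorname{supp}\varphi_t\subset B(0,t)$ with $t\sim\ell(K)$, one has $A_\alpha(\mathbf{1}_{I^0\setminus CK}\,\sigma)(y,t)=0$ whenever $C$ is moderately large, so the inequality above fails and your dictionary cannot produce $\mathscr{P}\lesssim\mathscr{A}_2+\mathscr{B}_\alpha$ directly. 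The paper's cure is the one-line observation placed right after the statement of Theorem~\ref{Theorem g^star-alpha}: the pointwise domination $g_\lambda^{*,\alpha}f\le g_{\lambda,\alpha}^*f$ (Wilson's intrinsic square function bound), which immediately gives $\mathscr{B}\lesssim\mathscr{B}_\alpha$; one then quotes Proposition~\ref{best constant} for the Poisson kernel to obtain $\mathscr{P}\lesssim\mathscr{A}_2+\mathscr{B}\lesssim\mathscr{A}_2+\mathscr{B}_\alpha$. This same domination also gives $\mathscr{A}_2\lesssim\mathscr{N}\le\mathscr{N}_\alpha$ more cheaply than your single-$\varphi_0$ argument. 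Once you insert this ingredient, your outline matches the paper's exactly.
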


Note that $g_{\lambda}^{*,\alpha}f(x) \leq g_{\lambda,\alpha}^*f(x)$, for all $x \in \Rn$. Since the main steps in the proof of Theorem $\ref{Theorem g^star-alpha}$ are the same as the Theorem $\ref{Theorem g^star}$, we omit the proof of Theorem \ref{Theorem g^star-alpha}.

The rest of this article is organized as follows. The necessary condition is shown in the Section $\ref{Sec-Necessity}$. In Section $\ref{Sec-Reduction}$, applying the random dyadic grids and martingale difference decomposition, we give the final reduction of the main theorem. In order to prove the sufficiency, some lemmas and elementary estimates are established in Section $\ref{Sec-Lemmas}$. Finally, in Section $\ref{Sec-Sufficiency}$, by splitting into four parts, we prove the sufficiency in Theorem $\ref{Theorem g^star}$.

\vspace{0.2cm}
\textbf{Notation.} We write $A \lesssim B$, if there is a constant $C > 0$ so that $A \leq C B$. We may also write $A \simeq B$ if
$B \lesssim A \lesssim B$.

We then set some dyadic notation. For cubes $Q$ and $R$ we denote
\begin{enumerate}
\item [$\bullet$] $\ell(Q)$ is the side-length of $Q$;
\item [$\bullet$] $d(Q,R)$ denotes the distance between the cubes $Q$ and $R$;
\item [$\bullet$] $D(Q,R) := \ell(Q) + \ell(R) + d(Q,R)$ is the long distance;
\item [$\bullet$] $\widehat{Q} := Q \times (0,\ell(Q)]$ is the Carleson box over $Q$;
\item [$\bullet$] $W_Q := Q \times (\ell(Q)/2, \ell(Q)]$ is the Whitney region associated with $Q$;
\item [$\bullet$] $Q^{(k)}$ denotes the unique dyadic cube for which $\ell(Q^{(k)}) = 2^k \ell(Q)$ and $Q \subset Q^{(k)}$;
\item [$\bullet$] $\ch(Q)$ denotes the dyadic children of Q. More precisely, if the cube $Q=x+[0,\ell)^n$, then
$\ch(Q):=\big\{x+\eta \ell/2 +[0,\ell/2)^n; \eta \in \{0,1\}^n \big\}$.
\end{enumerate}

%%%%%%%%%%%%%%%%%%%%%%%%%%%%%%%%%%%%%%%%%%%%%%%%%%%%%%%%%%%%%%%%%%%%%%%%%%%%%%%%%
%%%%%%%%%%%%%%%%%%%%%%%%%%%%%%%%%%%%%%%%%%%%%%%%%%%%%%%%%%%%%%%%%%%%%%%%%%
\section{The Necessity and constant estimates}\label{Sec-Necessity}
\subsection{Proposition} The inequality $(\ref{t-w-i-g^star})$ implies the inequality $(\ref{A2 condition})$.
\begin{proof}
For some fixed cube $Q$, we have
\begin{align*}
|\nabla P_t^\alpha (\mathbf{1}_Q \sigma)(y,t)|
\geq |\partial_t P_t^\alpha * (\mathbf{1}_Q \sigma)(y)| = \bigg| \int_Q \frac{n t^{\alpha + 1} - \alpha t^{\alpha - 1}|y-z|^2}
{(t^2 + |y-z|^2)^{\frac{n+\alpha}{2}+1}} \sigma dz \bigg|.
\end{align*}
If $x,y,z \in Q$ and $2 \ell(Q) \leq t \leq 3 \ell(Q)$, then
$$ \frac{n t^{\alpha + 1} - \alpha t^{\alpha - 1}|y-z|^2}{(t^2 + |y-z|^2)^{\frac{n+\alpha}{2}+1}} \gtrsim \frac{1}{t^{n+1}}.$$
Thus,
$$ |\nabla P_t^\alpha (\mathbf{1}_Q \sigma)(y,t)| \gtrsim \frac{\sigma(Q)}{t^{n+1}} .$$
Furthermore, for $x \in Q$,
\begin{align*}
g_\lambda^{*,\alpha}(\mathbf{1}_Q \sigma)(x)^2
&\geq \int_Q \int_{2 \ell(Q)}^{3 \ell(Q)}\Big(\frac{t}{t+|x-y|}\Big)^{n \lambda} \bigg| \int_Q \frac{n t^{\alpha + 1} -
\alpha t^{\alpha - 1}|y-z|^2}{(t^2 + |y-z|^2)^{\frac{n+\alpha}{2}+1}} \sigma dz \bigg|^2 \frac{dt}{t^{n-1}} dy \\
&\gtrsim \int_Q \int_{2 \ell(Q)}^{3 \ell(Q)} \frac{\sigma(Q)^2}{t^{3n+1}} dt dy
\gtrsim \frac{\sigma(Q)^2}{|Q|^2}.
\end{align*}
Therefore, the boundedness of $g_\lambda^{*,\alpha}$ gives that
$$\frac{\sigma(Q)w(Q)}{|Q|^2} \lesssim \frac{1}{\sigma(Q)} \big\| g_\lambda^{*,\alpha}(\mathbf{1}_Q \sigma) \big\|_{L^2(w)}^2 \leq \mathscr{N}^2.$$
That is, $\mathscr{A} \lesssim \mathscr{N}$.
\end{proof}
Moreover, it is trivial that $(\ref{t-w-i-g^star})$ implies $(\ref{testing condition-g^star})$. Thus, we have proved the necessity of Theorem $\ref{Theorem g^star}$.
%%%%%%%%%%%%%%%%%%%%%%%%%%%%%%%%%%%%%%%%%%%%%%%%%%%%%%%

\subsection{Random Dyadic Grids}
Let us recall random dyadic grids defined in \cite{H}. Denote by $\mathcal{D}=\mathcal{D}(\beta)$ the random dyadic grid, where $\beta=\{\beta_j\}_{j=-\infty}^{\infty} \in (\{0,1\}^n)^{\Z}$. That is,
$$ \mathcal{D}=\Big\{Q + \beta; Q \in \mathcal{D}_0 \Big\} := \Big\{Q + \sum_{j:2^{-j}<\ell(Q)} 2^{-j} \beta_j; Q \in \mathcal{D}_0 \Big\} ,$$
where $\mathcal{D}_0$ is the standard dyadic grid of $\Rn$.

\vspace{0.3cm}
\textbf{Good and Bad Cubes.}
A cube $I \in \mathcal{D}$ is said to be $bad$ if there exists a $J \in \mathcal{D}$ with $\ell(J) \geq 2^r\ell(I)$ such that
$\dist(I,\partial J) \leq \ell(I)^{\gamma} \ell(J)^{1-\gamma}$, where $r \in \Z_{+}$ and $\gamma \in (0,\frac12)$ are given parameters. Otherwise, $I$ is called $good$.

Throughout this article, we take $\gamma = \frac{\alpha}{2(n+\alpha)}$ and $r$ will be determined in the following. Moreover, roughly speaking, a dyadic cube $I$ will be bad if it is relatively close to the boundary of a much bigger dyadic cube. Denote
$\pi_{good} = \mathbb{P}_{\beta}(Q + \beta \ \text{is \ good})=\mathbb{E}_{\beta}(\mathbf{1}_{good}(Q + \beta))$.
Then $\pi_{good}$ is independent of $Q \in \mathcal{D}_0$. And we can choose $r$ large enough so that $\pi_{good}>0$.

%%%%%%%%%%%%%%%%%%%%%%%%%%%%%%%%%%%%%%%
\subsection{Definition}
Given a dyadic cube $I$, we set $\mathcal{W}_I$ to be the maximal dyadic cubes $K \subset I$ such that $2^r \ell(K) \leq \ell(I)$ and $\dist(K,\partial I) \geq \ell(K)^{\gamma} \ell(I)^{1-\gamma}$.

%%%%%%%%%%%%%%%%%%%%%%%%%%%%%%%%%%%%%%%%%%
In order to meet the demands below, we present the following proposition, which was proved in \cite{LL}.
\subsection{Proposition}\label{overlap}
The following statements hold.
\begin{enumerate}
\item [(1)] For any good $J \Subset I$, there is a cube $K \in \mathcal{W}_I$ which contains $J$;
\item [(2)] For any $C > 0$, provided $r$ is sufficiently large, depending upon $\gamma$, there holds
$$ \sum_{K \in \mathcal{W}_I} \mathbf{1}_{CK} \lesssim \mathbf{1}_I .$$
\end{enumerate}
Here, $J \Subset I$ means that $J \subset I$ and $2^r \ell(J) \leq \ell(I)$; in words, $J$ is strongly contained in $I$.

\subsection{The Pivotal Condition}
The $pivotal \ constant$ $\mathscr{P}$ is the smallest constant in the following inequality. For any cube $I^0$, and any partition of $I^0$ into dyadic cubes $\{I_j; j \in \N \}$, there holds
\begin{equation}
\sum_{j \in \N} \sum_{K \in \mathcal{W}_{I_j}} \mathcal{P}_\alpha(K,\mathbf{1}_{I^0}\sigma)^2 w(K)
\leq \mathscr{P}^2 \sigma(I^0),
\end{equation}
where Poisson term $$ \mathcal{P}_\alpha(I,f)=\int_{\Rn} \frac{\ell(I)^\alpha}{(\ell(I)+ \dist(x,I))^{n+\alpha}} f(x) dx.$$

To obtain the best constants, we give the following.
\subsection{Proposition}\label{best constant}
The $A_2$ condition $(\ref{A2 condition})$ and testing condition $(\ref{testing condition-g^star})$ imply the finiteness of the pivotal constant $\mathscr{P}$. In particular, there holds $\mathscr{P} \lesssim \mathscr{A} + \mathscr{B}$.

\begin{proof}
We follow the strategy used in \cite{LL}. Taking the large enough constant $C$ in Proposition~\ref{overlap} such that $\frac{\alpha}{2} \geq n (\frac{2}{C-1})^2$. The $A_2$ condition and Proposition $\ref{overlap}$ give that
$$ \sum_{j \in \N} \sum_{K \in \mathcal{W}_{I_j}} \mathcal{P}_\alpha(K,\mathbf{1}_{CK}\sigma)^2 w(K)
\lesssim \mathscr{A}^2 \sum_{j \in \N} \sum_{K \in \mathcal{W}_{I_j}} \sigma(CK)
\lesssim \mathscr{A}^2 \sigma(I^0).$$
Thus, it is enough to treat the Poisson terms $\mathcal{P}_\alpha(K,\mathbf{1}_{I^0 \setminus CK} \sigma)$.

It is easy to verify
$$\mathcal{P}_\alpha(K,\mathbf{1}_{I^0 \setminus CK} \sigma)
\lesssim t \ \partial_t P^\alpha_t(\mathbf{1}_{I^0 \setminus CK} \sigma)(y,t),
\ \ \text{for \ any} \ y \in K, \ t \simeq \ell(K).$$
Therefore,
$$ \mathcal{P}_\alpha(K,\mathbf{1}_{I^0 \setminus CK} \sigma)^2 w(K)
\lesssim \int_{K} \iint_{W_K} \Big(\frac{t}{t+|x-y|}\Big)^{n\lambda}|\nabla P^\alpha_t(\mathbf{1}_{I^0 \setminus CK} \sigma)(y,t)|^2
\frac{w dx dt}{t^{n-1}} dy .$$
Since we have
\begin{eqnarray*}\aligned
{}&\sum_{j \in \N} \sum_{K \in \mathcal{W}_{I_j}}\int_{K} \iint_{W_K} \Big(\frac{t}{t+|x-y|}\Big)^{n\lambda}
|\nabla P^\alpha_t(\mathbf{1}_{I^0}\sigma)(y,t)|^2  \frac{w dx dt}{t^{n-1}} dy\\
&\le\sum_{j \in \N} \sum_{K \in \mathcal{W}_{I_j}} \int_{\Rn} \iint_{W_K} \Big(\frac{t}{t+|x-y|}\Big)^{n\lambda}|\nabla P^\alpha_t(\mathbf{1}_{I^0}\sigma)(y,t)|^2  \frac{w dx dt}{t^{n-1}} dy\\
&\le\int_{\Rn} \iint_{\widehat{I^0}} \Big(\frac{t}{t+|x-y|}\Big)^{n\lambda}|\nabla P^\alpha_t(\mathbf{1}_{I^0}\sigma)(y,t)|^2  \frac{w dx dt}{t^{n-1}} dy\\
&\le\mathscr{B}^2 \sigma(I^0),\endaligned
\end{eqnarray*}
and
\begin{eqnarray*}\aligned
{}&\sum_{j \in \N}\sum_{K \in \mathcal{W}_{I_j}}\int_{K} \iint_{W_K} \Big(\frac{t}{t+|x-y|}\Big)^{n\lambda}|\nabla P^\alpha_t(\mathbf{1}_{CK} \sigma)(y,t)|^2  \frac{w dx dt}{t^{n-1}} dy\\
&\le\sum_{j \in \N}\sum_{K \in \mathcal{W}_{I_j}}\int_{\Rn} \iint_{\widehat{CK}} \Big(\frac{t}{t+|x-y|}\Big)^{n\lambda}|\nabla P^\alpha_t(\mathbf{1}_{CK} \sigma)(y,t)|^2  \frac{w dx dt}{t^{n-1}} dy\\
&\le \mathscr{B}^2 \sum_{j \in \N}\sum_{K \in \mathcal{W}_{I_j}} \sigma(CK)\\
&\lesssim\mathscr{B}^2 \sigma(I^0),\endaligned
\end{eqnarray*}
the desired estimate follows immediately.

\end{proof}
%%%%%%%%%%%%%%%%%%%%%%%%%%%%%%%%%%%%%%%%%%%%%%%%%%%%%%%%%%%%%%%%%%%%%%%%%%%%%%%%%%
\section{The Probabilistic Reduction}\label{Sec-Reduction}
Our next task is to simplify the proof of sufficiency. The probabilistic techniques we will use are taken from \cite{H}. We here need some fundamental tools, including the random dyadic grids, the probabilistic good/bad decompositions and the martingale difference expansions, which can be found in \cite{H},\cite{LL}, \cite{LM}, and essentially goes back to \cite{NTV}.
%%%%%%%%%%%%%%%%%%%%%%%%%%%%%%%%%%%%%%%%%%
\subsection{The Generalized Result}
In order to prove the main theorem, it is enough to show the following generalized result.
\begin{equation}
\big\| g_{\psi,\lambda}^*(f \cdot \sigma) \big\|_{L^2(w)} \lesssim (\mathscr{A} + \mathscr{B}) \big\| f \big\|_{L^2(\sigma)},
\end{equation}
where
$$ g_{\psi,\lambda}^*(f)(x)=\bigg(\iint_{\R^{n+1}_{+}} \Big(\frac{t}{t+|x-y|}\Big)^{n\lambda}
|\psi_t*f(y)|^2 \frac{dy dt}{t^{n+1}}\bigg)^{1/2},$$
$\psi_t(x)=\frac{1}{t^n} \psi(\frac{x}{t})$ and $\psi$ satisfies the following conditions:
\begin{enumerate}
\item [(1)] $|\psi(x)| \lesssim (1+|x|)^{-n-\alpha}$;
\item [(2)] $|\psi(x)-\psi(y)| \lesssim |x-y|^\alpha (1+|x|)^{-n - \alpha}$.
\end{enumerate}
A particular case of the above function class was introduced by Wilson \cite[p.~114]{W}. However, we do not need the cancellation property of $\psi$ in this paper.

%%%%%%%%%%%%%%%%%%%%%%%%%%%%%%%%%%%%%
\subsection{Averaging over Good Whitney Regions}
Let $f \in L^2(\sigma)$.
Note that the position and goodness of $R + \beta$ are independent (see \cite{H}). Therefore, one can write
\begin{align*}
& \big\| g_{\psi,\lambda}^*(f \cdot \sigma)\big\|_{L^2(w)}^2 \\
&= \int_{\Rn} \iint_{\R^{n+1}_{+}} \Big(\frac{t}{t+|y|}\Big)^{n\lambda}
|\psi_t*(f \cdot \sigma)(x-y)|^2 \frac{dy dt}{t^{n+1}} w dx \\
&= \iint_{\R^{n+1}_{+}} \int_{\Rn} |\psi_t*(f \cdot \sigma)(x-y)|^2 \Big(\frac{t}{t+|y|}\Big)^{n\lambda}
 \frac{dy}{t^n} w dx \frac{dt}{t}\\
&= \mathbb{E}_{\beta} \sum_{R \in \mathcal{D}_0} \iint_{W_{R + \beta}} \int_{\Rn} |\psi_t*(f \cdot \sigma)(x-y)|^2 \Big(\frac{t}{t+|y|}\Big)^{n\lambda} \frac{dy}{t^n} w dx \frac{dt}{t}\\
&= \frac{1}{\pi_{good}} \sum_{R \in \mathcal{D}_0} \mathbb{E}_{\beta}(\mathbf{1}_{good}(R+\beta)) \mathbb{E}_{\beta} \iint_{W_{R + \beta}} \int_{\Rn} |\psi_t*(f \cdot \sigma)(x-y)|^2 \Big(\frac{t}{t+|y|}\Big)^{n\lambda} \frac{dy}{t^n} w dx \frac{dt}{t}\\
&= \frac{1}{\pi_{good}} \sum_{R \in \mathcal{D}_0} \mathbb{E}_{\beta}\bigg(\mathbf{1}_{good}(R+\beta) \iint_{W_{R + \beta}} \int_{\Rn} |\psi_t*(f \cdot \sigma)(x-y)|^2 \Big(\frac{t}{t+|y|}\Big)^{n\lambda} \frac{dy}{t^n} w dx \frac{dt}{t} \bigg)\\
&= \frac{1}{\pi_{good}} \mathbb{E}_{\beta} \sum_{R \in \mathcal{D}_{good}} \iint_{W_R} \int_{\Rn} |\psi_t*(f \cdot \sigma)(x-y)|^2 \Big(\frac{t}{t+|y|}\Big)^{n\lambda} \frac{dy}{t^n} w dx \frac{dt}{t}.
\end{align*}

With the monotone convergence theorem, it suffices to show that there exists a constant $\mathcal{C} > 0$ such that
for any $s \in \N$, we have
$$ \sum_{\substack{R \in \mathcal{D}_{good} \\ R\subset [-2^s, 2^s]^n \\ 2^{-s}\le \ell(R)\leq 2^s}} \iint_{W_R} \int_{\Rn} |\psi_t*(f \cdot \sigma)(x-y)|^2 \Big(\frac{t}{t+|y|}\Big)^{n\lambda} \frac{dy}{t^n} w dx \frac{dt}{t}
\leq \mathcal{C} (\mathscr{A} + \mathscr{B})^2 \big\| f \big\|_{L^2(\sigma)}^2.$$
%%%%%%%%%%%%%%%%%%%%%%%%%%%%

\subsection{The Final Reduction}
In order to get the further reduction, we introduce the martingale difference decomposition. Define
$$\mathbb{E}_Q^{\sigma}f := \frac{1}{\sigma(Q)} \int_Q f d\sigma,$$
assuming that $\sigma(Q) > 0$, otherwise set it to be zero. For the martingale differences,
$$ \Delta_Q^{\sigma} f := \sum_{Q' \in ch(Q)} (\mathbb{E}_{Q'}^{\sigma}f - \mathbb{E}_Q^{\sigma}f)\mathbf{1}_{Q'}.$$
For fixed $s \in \N$, by Lebesgue differentiation theorem, we can write
$$ f = \sum_{\substack{Q \in \mathcal{D}\\ \ell(Q) \leq 2^s}} \Delta_Q^{\sigma}f
+ \sum_{\substack{Q \in \mathcal{D}\\ \ell(Q)=2^s}} (\mathbb{E}_Q^{\sigma}f) \mathbf{1}_Q .$$
Since $\{ \Delta_Q^{\sigma}f \}_{Q \in \mathcal{D}}$ is a family of orthogonal functions, we have
$$ \big\|f\big\|_{L^2(\sigma)}^2 = \sum_{\substack{Q \in \mathcal{D}\\ \ell(Q) \leq 2^s}}
\big\| \Delta_Q^{\sigma}f \big\|_{L^2(\sigma)}^2
+ \sum_{\substack{Q \in \mathcal{D}\\ \ell(Q)=2^s}} \big\|(\mathbb{E}_Q^{\sigma}f) \mathbf{1}_Q \big\|_{L^2(\sigma)}^2 .$$

Now we claim that we can assume that $f$ is compactly supported, say $\supp f\subset Q^0$. Let $\mathscr F$ denote the subspace of $L^2(\sigma)$ which has compact support. We shall show that
\begin{equation}\label{eq:compact}
\mathscr K:=\sup_{\substack{f\in \mathscr F\\ \|f\|_{L^2(\sigma)}=1}} \|g_{\psi,\lambda}^*(f\sigma)\|_{L^2(w)} <\infty.
\end{equation}
Indeed, if \eqref{eq:compact} is proved, then for any $f\in L^2(\sigma)$ and $\varepsilon>0$, there exists some cube $Q$ such that
\[
\|f-f\chi^{}_{Q}\|_{L^2(\sigma)}<\varepsilon \|f\|_{L^2(\sigma)},
\]
For simplicity, set $g:=f-f\chi^{}_{Q}$. Then we have
\begin{align*}
&\sum_{\substack{R \in \mathcal{D}_{good} \\ R \subset [-2^s, 2^s]^n \\ 2^{-s}\le \ell(R)\leq 2^s}} \iint_{W_R} \int_{\Rn} |\psi_t*(f \cdot \sigma)(x-y)|^2 \Big(\frac{t}{t+|y|}\Big)^{n\lambda} \frac{dy}{t^n} w dx \frac{dt}{t}\\
&\leq 2 \sum_{\substack{R \in \mathcal{D}_{good} \\ R \subset [-2^s, 2^s]^n\\ 2^{-s}\le \ell(R)\leq 2^s}} \iint_{W_R} \int_{\Rn} |\psi_t*(f\chi^{}_{Q} \cdot \sigma)(x-y)|^2 \Big(\frac{t}{t+|y|}\Big)^{n\lambda} \frac{dy}{t^n} w dx \frac{dt}{t}\\
&\quad + 2 \sum_{\substack{R \in \mathcal{D}_{good} \\ R \subset [-2^s, 2^s]^n \\ 2^{-s}\le \ell(R)\leq 2^s}} \iint_{W_R} \int_{\Rn} |\psi_t*(g \cdot \sigma)(x-y)|^2 \Big(\frac{t}{t+|y|}\Big)^{n\lambda} \frac{dy}{t^n} w dx \frac{dt}{t}.
\end{align*}
Substitute with
\[ f(x)=\frac {\ell(Q)^\alpha}{(\ell(Q)+\textup{dist}(x,Q))^{n+\alpha}}\chi^{}_{Q'\setminus 4\sqrt n Q} \]
in \eqref{eq:compact} and using similar arguments as that in \cite{LL}, we get
\[ \int_{Q'} \frac{\ell(Q)^{2 \alpha}}{(\ell(Q)+\dist(z,Q))^{2(n+\alpha)}}d\sigma(z) w(Q)\lesssim \mathscr{K}^2+ \mathscr{A}^2 .\]
Then by letting $Q'$ increase to $\mathbb R^n$, we know that \eqref{eq:compact} and the $A_2$ condition imply the Poisson type $A_2$ condition. Therefore,
\begin{eqnarray*}
&&\iint_{W_R} \int_{\Rn} |\psi_t*(g \cdot \sigma)(x-y)|^2 \Big(\frac{t}{t+|y|}\Big)^{n\lambda} \frac{dy}{t^n} w dx \frac{dt}{t}\\
&\le& \iint_{W_R}  \int_{\Rn} \|g\|_{L^2(\sigma)}^2 \|\psi_t(x-y-\cdot)\|_{L^2(\sigma)}^2 \Big(\frac{t}{t+|y|}\Big)^{n\lambda} \frac{dy}{t^n} w dx \frac{dt}{t}\\
&\le& C_n\int_{\Rn} \frac{\ell(R)^{2\alpha}}{(\ell(R)+\dist(z,R))^{2(n+\alpha)}}d\sigma(z) w(R) \|g\|_{L^2(\sigma)}^2\\
&\le& C_n(\mathscr K^2+\mathscr{A}^2)\varepsilon^2 \|f\|_{L^2(\sigma)}^2
\end{eqnarray*}
Then by taking sufficiently large cube $Q$ such that  $2^{(2s+2)n}C_n(\mathscr K^2+\mathscr{A}^2)\varepsilon^2< \mathscr K^2$. We finally get
\begin{eqnarray*}
\sum_{\substack{R \in \mathcal{D}_{good} \\ R\subset [-2^s, 2^s]^n \\ 2^{-s}\le \ell(R)\leq 2^s}} \iint_{W_R} \int_{\Rn} |\psi_t*(f \cdot \sigma)(x-y)|^2 \Big(\frac{t}{t+|y|}\Big)^{n\lambda} \frac{dy}{t^n} w dx \frac{dt}{t}
&\le&4 \mathscr K^2 \|f\|_{L^2(\sigma)},
\end{eqnarray*}
which means that we reduce the problem to prove \eqref{eq:compact}.
Then by repeating the previous arguments, we further reduce the problem to estimate
\[
\sum_{\substack{R \in \mathcal{D}_{good} \\ R\subset [-2^s, 2^s]^n \\ 2^{-s}\le \ell(R)\leq 2^s}} \iint_{W_R} \int_{\Rn} |\psi_t*(f \cdot \sigma)(x-y)|^2 \Big(\frac{t}{t+|y|}\Big)^{n\lambda} \frac{dy}{t^n} w dx \frac{dt}{t},
\]
where $f$ has compact support.  Assume that $\supp f\subset [-2^{s'}, 2^{s'}]^n$. Without loss of generality, we can assume that $s\ge s'+1$. Then it suffices to estimate
\[
\sum_{\substack{R \in \mathcal{D}_{good} \\ R\subset [-2^s, 2^s]^n \\ 2^{-s}\le \ell(R)\leq 2^s}} \iint_{W_R} \int_{\Rn} |\psi_t*( f\mathbf 1_{[-2^{s-1},2^{s-1}]^n} \cdot \sigma)(x-y)|^2 \Big(\frac{t}{t+|y|}\Big)^{n\lambda} \frac{dy}{t^n} w dx \frac{dt}{t}.
\]
Denote $\mathscr F_s$ the subspace of $\mathscr F$ which supported in $[-2^{s-1},2^{s-1}]^n$.
\[
\mathscr K_s:=\sup_{\substack{f \in \mathscr{F}_s\\ \|f\|_{L^2(\sigma)=1}}}\sum_{\substack{R \in \mathcal{D}_{good}\\ R\subset [-2^s, 2^s]^n \\ 2^{-s}\le \ell(R)\leq 2^s}} \iint_{W_R} \int_{\Rn} |\psi_t*(f \cdot \sigma)(x-y)|^2 \Big(\frac{t}{t+|y|}\Big)^{n\lambda} \frac{dy}{t^n} w dx \frac{dt}{t}
\]
Similar arguments as the previous show that
\begin{eqnarray*}
&& \iint_{W_R} \int_{\Rn} |\psi_t*(f  \cdot \sigma)(x-y)|^2 \Big(\frac{t}{t+|y|}\Big)^{n\lambda} \frac{dy}{t^n} w dx \frac{dt}{t}\\
&\le& C_n\int_{[-2^{s-2},2^{s-2}]^n} \frac{\ell(R)^{2\alpha}}{(\ell(R)+\dist(z,R))^{2(n+\alpha)}}d\sigma(z) w(R) \|f\|_{L^2(\sigma)}^2\\
&\le& 2^{2sn}C_n \mathscr{A}^2 \|f\|_{L^2(\sigma)}^2,
\end{eqnarray*}
which means that $\mathscr K_s\le 2^{(4s+2)n} C_n \mathscr{A}^2<\infty $. Using the martingale decomposition, we can write
\[
f =\sum_{\substack{Q\in\mathcal D\\ \ell(Q)\le 2^s}}\Delta_Q^\sigma f,
\]
when $\ell(Q)=2^s$, $\Delta_Q^\sigma$ should be understood as $\Delta_Q^\sigma + \mathbb{E}_Q^\sigma$.
Denote
\[
f_{\textup{good}}=\sum_{\substack{Q\in \mathcal D_{\textup{good}}\\ \ell(Q)\le 2^s}}\Delta_Q^\sigma f.
\]
Again, we can set $\tilde g:= f -f_{\textup{good}}$. For any $\varepsilon>0$, choosing $r$ sufficiently large such that $\|\tilde g\|_{L^2(\sigma)}<\varepsilon$, see \cite{Lacey1}. Then we have
\begin{align*}
\mathscr{K}_s &\leq
2\sup_{\substack{f\in\mathscr F\\ \|f\|_{L^2(\sigma)=1}}}\sum_{\substack{R \in \mathcal{D}_{good}\\ R\subset [-2^s, 2^s]^n \\ 2^{-s}\le \ell(R)\leq 2^s}} \iint_{W_R} \int_{\Rn} |\psi_t*(f_{\textup{good}} \cdot \sigma)(x-y)|^2 \Big(\frac{t}{t+|y|}\Big)^{n\lambda} \frac{dy}{t^n} w dx \frac{dt}{t}\\
&\quad + 2 \mathscr{K}_s \|\tilde g\|_{L^2(\sigma)}^2.
\end{align*}
By taking $\varepsilon=1/2$, (which means that $r$ is independent of $s$) we reduce the problem to prove
\begin{equation}
\sum_{\substack{R \in \mathcal{D}_{good} \\ \ell(R) \leq 2^s}} \iint_{W_R} \int_{\Rn} \Big|\sum_{\substack{Q \in \mathcal{D}_{good} \\ \ell(Q) \leq 2^s}} \psi_t*(\Delta_Q^{\sigma}f \cdot \sigma)(x-y)\Big|^2  \Big(\frac{t}{t+|y|}\Big)^{n\lambda} \frac{dy}{t^{n}} w dx \frac{dt}{t}
\lesssim (\mathscr{A} + \mathscr{B})^2 \big\|f\big\|_{L^2(\sigma)}^2.
\end{equation}

%%%%%%%%%%%%%%%%%%%%%%%%%%%%%%%%%%%%%%%%%%%%%%%%%%%%%%%%%%%%%%%%%%%%%%%%%%%%%%%%%%
\section{Some Lemmas And Elementary Estimates}\label{Sec-Lemmas}
To prove the boundedness of $g_{\lambda}^*(\cdot \sigma)$ from $L^2(\sigma)$ to $L^2(w)$, we here present some crucial estimates and lemmas.
%%%%%%%%%%%%%%%%%%%%%%%%%%%%%
\subsection{Elementary Estimate 1}
Let $0 < \alpha \leq n(\lambda - 2)/2$. For given cubes $Q, R \in \mathcal{D}$ and $(x,t) \in W_R$,
we have the following estimate
\begin{equation}\label{Key Estimate-1}
\bigg( \int_{\Rn} |\psi_t*((\Delta_Q^{\sigma}f) \sigma)(x-y)|^2 \Big(\frac{t}{t+|y|}\Big)^{n\lambda}\frac{dy}{t^n}\bigg)^{1/2}
\lesssim \frac{\ell(R)^\alpha \ \sigma(Q)^{1/2}}{(\ell(R)+d(Q,R))^{n+\alpha}} \big\| \Delta_Q^{\sigma}f \big\|_{L^2(\sigma)}.
\end{equation}

\begin{proof}
By the size condition, we obtain
$$|\psi_t*((\Delta_Q^{\sigma}f )\cdot \sigma)(x-y)|
\lesssim \int_{\Rn}\frac{t^\alpha}{(t+|x-y-z|)^{n+\alpha}} |\Delta_Q^{\sigma}f(z)|d\sigma(z). $$
Since $z \in Q$ and $x \in R$, $|x-z|\geq d(Q,R)$.

If $|y|\leq \frac12 d(Q,R)$, then $|x-y-z| \geq |x-z|-|y| \geq \frac12 d(Q,R)$.
Thus,
\begin{align*}
|\psi_t*((\Delta_Q^{\sigma}f) \sigma)(x-y)|
&\lesssim \frac{\ell(R)^\alpha}{(\ell(R)+d(Q,R))^{n+\alpha}} \big\| \Delta_Q^{\sigma}f \big\|_{L^1(\sigma)} \\
&\leq \frac{\ell(R)^\alpha}{(\ell(R)+d(Q,R))^{n+\alpha}} \sigma(Q)^{1/2} \big\| \Delta_Q^{\sigma}f \big\|_{L^2(\sigma)},
\end{align*}
and
\begin{align*}
\bigg( \int_{|y|\leq \frac12 d(Q,R)} |\psi_t*((\Delta_Q^{\sigma}f) \sigma)(x-y)|^2 \Big(\frac{t}{t+|y|}\Big)^{n\lambda}\frac{dy}{t^n}\bigg)^{1/2}
\lesssim \frac{\ell(R)^\alpha \ \sigma(Q)^{1/2}}{(\ell(R)+d(Q,R))^{n+\alpha}} \big\| \Delta_Q^{\sigma}f \big\|_{L^2(\sigma)}.
\end{align*}

If $|y| > \frac12 d(Q,R)$, then
$$ \Big(\frac{t}{t+|y|}\Big)^{n\lambda}\frac{1}{t^n} \lesssim \frac{\ell(R)^{n\lambda - n}}{(\ell(R)+d(Q,R))^{n\lambda}}.$$
Hence, by Young's inequality, it yields that
\begin{align*}
&\bigg( \int_{|y| > \frac12 d(Q,R)} |\psi_t*((\Delta_Q^{\sigma}f) \sigma)(x-y)|^2 \Big(\frac{t}{t+|y|}\Big)^{n\lambda}\frac{dy}{t^n}\bigg)^{1/2} \\
&\lesssim \frac{\ell(R)^{\frac{n\lambda}{2} - \frac{n}{2}}}{(\ell(R)+d(Q,R))^{\frac{n\lambda}{2}}} \big\|\psi_t*((\Delta_Q^{\sigma}f )\sigma)(x-\cdot)\big\|_{L^2(\Rn)} \\
&\leq \frac{\ell(R)^{\frac{n\lambda}{2} - \frac{n}{2}}}{(\ell(R)+d(Q,R))^{\frac{n\lambda}{2}}} \big\|\psi_t\big\|_{L^2(\Rn)} \big\|\Delta_Q^{\sigma}f \big\|_{L^1(\sigma)} \\
&\lesssim \frac{\ell(R)^{\frac{n\lambda}{2} - \frac{n}{2}} t^{-\frac{n}{2}}}{(\ell(R)+d(Q,R))^{\frac{n\lambda}{2}}}
\sigma(Q)^{1/2} \big\| \Delta_Q^{\sigma}f \big\|_{L^2(\sigma)} \\
&\lesssim \frac{\ell(R)^{\frac{n\lambda}{2} - n}}{(\ell(R)+d(Q,R))^{\frac{n\lambda}{2}}} \sigma(Q)^{1/2}
\big\| \Delta_Q^{\sigma}f \big\|_{L^2(\sigma)} \\
&\leq \frac{\ell(R)^\alpha}{(\ell(R)+d(Q,R))^{n+\alpha}} \sigma(Q)^{1/2} \big\| \Delta_Q^{\sigma}f \big\|_{L^2(\sigma)},
\end{align*}
where we have used the condition $0 < \alpha \leq n(\lambda - 2)/2$ in the last step.

This completes the proof of $(\ref{Key Estimate-1})$.
\end{proof}
%%%%%%%%%%%%%%%%%%%%%%%%%%

\subsection{Elementary Estimate 2}
Let $0 < \alpha \leq n(\lambda - 2)/2$. Assume that $Q, R \in \mathcal{D}$ are given cubes with $\ell(Q) < \ell(R)$,
$\ell(Q) < 2^s$ and $(x,t) \in W_R$. Then we have the following estimate
\begin{equation}\label{Key Estimate-2}
\bigg( \int_{\Rn} |\psi_t*(\Delta_Q^{\sigma}f \cdot \sigma)(x-y)|^2 \Big(\frac{t}{t+|y|}\Big)^{n\lambda}\frac{dy}{t^n}\bigg)^{1/2}
\lesssim \frac{\ell(Q)^{\alpha/2} \ell(R)^{\alpha/2} \ \sigma(Q)^{1/2}}{(\ell(R)+d(Q,R))^{n+\alpha}}  \big\| \Delta_Q^{\sigma}f \big\|_{L^2(\sigma)}.
\end{equation}

\begin{proof}
Let $z_Q$ be the center of $Q$. By the cancellation condition $\int_Q \Delta_Q^{\sigma}f \sigma dx =0$,we have
$$\psi_t*(\Delta_Q^{\sigma}f \cdot \sigma)(x-y)
= \int_{Q}(\psi_t(x-y-z)-\psi_t(x-y-z_Q)) \Delta_Q^{\sigma}f(z) d\sigma(z). $$
Since $|z-z_Q| \leq \frac{\sqrt{n}}{2}\ell(Q) \leq \frac{\sqrt{n}}{4} \ell(R) < \frac{\sqrt{n}}{2} t \leq \frac{\sqrt{n}}{2} \ell(R)$, we have
\begin{align*}
|\psi_t(x-y-z)-\psi_t(x-y-z_Q)|
\lesssim \frac{|z-z_Q|^\alpha}{(t+|x-y-z|)^{n+\alpha}}
\lesssim \frac{\ell(Q)^{\alpha/2} \ell(R)^{\alpha/2}}{(t+|x-y-z|)^{n+\alpha}}.
\end{align*}
Making use the similar arguments as in the preceding subsection, we will obtain the inequality $(\ref{Key Estimate-2})$.
\end{proof}
%%%%%%%%%%%%%%%%%%%%%%%%%%%%%%%%%%%%%%%%%%
\subsection{Some Lemmas}
For convenience, we here present two key lemmas, which will be used later.
\begin{lemma}[\cite{LL}]\label{A-alpha-QR}
Let $$A_{QR}^{\epsilon}=\frac{\ell(Q)^{\epsilon/2} \ell(R)^{\epsilon/2}}{D(Q,R)^{n+\epsilon}} \sigma(Q)^{1/2} w(R)^{1/2},$$
where $Q,R \in \mathcal{D}$ and $\epsilon > 0$. Then for any $x_Q, y_R \geq 0$, we have the following estimate
$$
\Big( \sum_{Q,R \in \mathcal{D}} A_{QR}^{\epsilon} x_Q y_R \Big)^2
\lesssim \mathscr{A}^2 \sum_{Q \in \mathcal{D}} x_Q^2 \times \sum_{R \in \mathcal{D}} y_R^2.
$$
In particular, there holds that
$$
\sum_{R \in \mathcal{D}} \Big( \sum_{Q \in \mathcal{D}} A_{QR}^\epsilon x_{Q} \Big)^2
\lesssim \mathscr{A}^2 \sum_{Q \in \mathcal{D}} x_{Q}^2.
$$
\end{lemma}

\begin{lemma}\label{lemma-R-K-S}
Let $0 < \alpha \leq n(\lambda - 2)/2$. Given three cubes $R \subset K \subset S$, and function $f$ satisfies $\supp(f) \cap S = \emptyset$. If $\dist(R,\partial K) \geq \ell(R)^\gamma \ell(K)^{1-\gamma}$, then there holds
\begin{equation}\label{R-K-S}
\int_{\Rn} \iint_{W_R} |\psi_t*(f \cdot \sigma)(x-y)|^2 w dx \Big(\frac{t}{t+|y|}\Big)^{n\lambda}
\frac{dt dy}{t^{n+1}} \lesssim \bigg(\frac{\ell(R)}{\ell(K)}\bigg)^\alpha \mathcal{P}_\alpha(K,|f|\sigma)^2 w(R).
\end{equation}
\end{lemma}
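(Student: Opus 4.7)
The strategy is to integrate out the $y$-variable via a smoothing plus Cauchy--Schwarz argument, reducing the $g_\lambda^*$-type estimate to a pointwise Poisson bound, and then use $|x-z|\gtrsim d:=\dist(R,\partial K)\geq\ell(R)^\gamma\ell(K)^{1-\gamma}$ for $x\in R$ and $z\in\supp f\subset K^c$. First, I would expand $|\psi_t*(f\sigma)(x-y)|^2$ via the size bound $|\psi_t(v)|\lesssim t^\alpha/(t+|v|)^{n+\alpha}$ and interchange with the $y$-integral by Fubini; after the change of variable $v=x-y$ this yields
\[
\int_{\Rn} |\psi_t*(f\sigma)(x-y)|^2 \Big(\tfrac{t}{t+|y|}\Big)^{n\lambda}\, \tfrac{dy}{t^n} \lesssim \iint |f(z)||f(z')|\, J(z,z')\, d\sigma(z)\, d\sigma(z'),
\]
with $J(z,z') := \int \frac{t^{2\alpha}}{(t+|v-z|)^{n+\alpha}(t+|v-z'|)^{n+\alpha}}\big(\tfrac{t}{t+|x-v|}\big)^{n\lambda}\, \tfrac{dv}{t^n}$.

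The crucial ingredient is a smoothing estimate
\[
\int \frac{t^{2\alpha}}{(t+|v-z|)^{2(n+\alpha)}} \Big(\tfrac{t}{t+|x-v|}\Big)^{n\lambda}\, \tfrac{dv}{t^n} \lesssim \frac{t^{2\alpha}}{(t+|x-z|)^{2(n+\alpha)}},
\]
valid precisely when $n\lambda\geq 2(n+\alpha)$. I would prove it by splitting the $v$-integral into $\{|v-z|\leq|x-z|/2\}$, $\{|v-z|\sim|x-z|\}$, and $\{|v-z|>2|x-z|\}$, estimating directly in each; the exponent hypothesis is what lets the near contribution $(t/(t+|x-z|))^{n\lambda}\cdot t^{-2n}$ be absorbed by $t^{2\alpha}/(t+|x-z|)^{2(n+\alpha)}$. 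Applying Cauchy--Schwarz in $v$ to $J(z,z')$ with $F_z(v):=(t/(t+|x-v|))^{n\lambda/2}/(t+|v-z|)^{n+\alpha}$ and the smoothing estimate to each $\|F_z\|_{L^2(dv)}^2$ yields $J(z,z')\lesssim t^{2\alpha}/[(t+|x-z|)^{n+\alpha}(t+|x-z'|)^{n+\alpha}]$, and therefore
\[
\int |\psi_t*(f\sigma)(x-y)|^2 \Big(\tfrac{t}{t+|y|}\Big)^{n\lambda}\, \tfrac{dy}{t^n} \lesssim \bigg(\int \frac{t^\alpha |f(z)|}{(t+|x-z|)^{n+\alpha}}\, d\sigma(z)\bigg)^2.
\]

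For $x\in R$ and $z\in\supp f\subset K^c$, we have $|x-z|\geq\max(d,\dist(z,K))$ and $|x-z|\lesssim\ell(K)+\dist(z,K)$; an elementary comparison gives $\frac{t^\alpha}{(t+|x-z|)^{n+\alpha}}\lesssim\frac{t^\alpha\ell(K)^n}{d^{n+\alpha}}\cdot\frac{\ell(K)^\alpha}{(\ell(K)+\dist(z,K))^{n+\alpha}}$, so the Poisson integral is $\lesssim\frac{t^\alpha\ell(K)^n}{d^{n+\alpha}}\mathcal{P}_\alpha(K,|f|\sigma)$. Squaring and integrating over $W_R$ against $w(x)\,dx\,dt/t^{n+1}$, using $\int_R w=w(R)$ and $\int_{\ell(R)/2}^{\ell(R)}t^{2\alpha-1}\,dt\sim\ell(R)^{2\alpha}$, produces
\[
\frac{\ell(R)^{2\alpha}\,\ell(K)^{2n}}{d^{2(n+\alpha)}}\,\mathcal{P}_\alpha(K,|f|\sigma)^2\, w(R);
\]
the choice $\gamma=\alpha/(2(n+\alpha))$ makes $d^{2(n+\alpha)}\geq\ell(R)^\alpha\ell(K)^{2n+\alpha}$, so the prefactor collapses to $(\ell(R)/\ell(K))^\alpha$, yielding the claim. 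The main obstacle is the smoothing estimate: while technically routine, it demands careful three-region case analysis and critically uses the range restriction $\alpha\leq n(\lambda-2)/2$ to absorb the competing powers of $t$ and $t+|x-z|$.
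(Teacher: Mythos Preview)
Your argument is correct, but it is organized differently from the paper's proof. The paper never isolates a smoothing lemma; instead, for fixed $(x,t)\in W_R$ it splits the $y$-integral according to the $z$-dependent threshold $|y|\lessgtr\tfrac12\dist(z,R)$. For small $|y|$ it uses $|x-y-z|\gtrsim\dist(z,R)$ to bound $|\psi_t(x-y-z)|$ pointwise, while for large $|y|$ it absorbs $(t/(t+|y|))^{n\lambda}/t^n\lesssim \ell(R)^{n\lambda-n}/(\ell(R)+\dist(z,R))^{n\lambda}$ into the $z$-integrand and then applies Young's inequality in $y$. The Poisson comparison is also packaged differently: the paper proves directly that
\[
\frac{\ell(R)^\alpha}{(\ell(R)+\dist(z,R))^{n+\alpha}}
\lesssim\Big(\frac{\ell(R)}{\ell(K)}\Big)^{\alpha/2}\frac{\ell(K)^\alpha}{(\ell(K)+\dist(z,K))^{n+\alpha}},
\]
so the gain $(\ell(R)/\ell(K))^\alpha$ appears from the squared Poisson kernel rather than from a separate factor $d^{-2(n+\alpha)}$ as in your computation. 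Your route has the advantage of isolating the convolution estimate
\[
\int_{\Rn}\frac{1}{(t+|v-z|)^{2(n+\alpha)}}\Big(\frac{t}{t+|x-v|}\Big)^{n\lambda}\frac{dv}{t^n}\lesssim\frac{1}{(t+|x-z|)^{2(n+\alpha)}}
\]
as a clean reusable statement in which the hypothesis $\alpha\le n(\lambda-2)/2$ is invoked exactly once; the paper's case split is more ad hoc but sidesteps the three-region analysis you flag as the main obstacle. The incidental upper bound $|x-z|\lesssim\ell(K)+\dist(z,K)$ you record is not actually needed for your comparison (only the lower bounds $|x-z|\ge d$ and $|x-z|\ge\dist(z,K)$, together with $d\lesssim\ell(K)$, are used).
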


\begin{proof}
First, we shall prove, for any $z \not\in S$,
\begin{equation}\label{d(z,R)-d(z,K)}
\frac{\ell(R)^\alpha}{(\ell(R) + \dist(z,R))^{n+\alpha}}
\leq \Big[ \frac{\ell(R)}{\ell(K)} \Big]^{\alpha /2} \frac{\ell(K)^\alpha}{(\ell(K)+ \dist(z,K))^{n+\alpha}}.
\end{equation}
In fact, since $\dist(z,R) \geq \dist(z,K) + \dist(R,\partial K)$, we have
\begin{align*}
\frac{\ell(R)^\alpha}{(\ell(R)+ \dist(z,R))^{n+\alpha}}
&= \bigg(\frac{\ell(R)}{\ell(K)}\bigg)^\alpha \frac{\ell(K)^\alpha}{(\ell(R)+ \dist(z,R))^{n+\alpha}} \\
&\lesssim \bigg( \frac{\ell(R)}{\ell(K)} \bigg)^{\alpha -(n+\alpha)\gamma} \frac{\ell(K)^\alpha}{(\ell(K)+ \dist(z,K))^{n+\alpha}}.
\end{align*}

Secondly, we turn to the estimate of $(\ref{R-K-S})$. Decompose
\begin{align*}
&\int_{\Rn} |\psi_t*(f \cdot \sigma)(x-y)|^2 \Big(\frac{t}{t+|y|}\Big)^{n\lambda} \frac{dy}{t^{n}} \\
&\leq \int_{\Rn} \bigg(\int_{z:|y| \leq \frac12 \dist(z,R)}|\psi_t(x-y-z)| |f(z)| d\sigma(z) \bigg)^2 \Big(\frac{t}{t+|y|}\Big)^{n\lambda} \frac{dy}{t^{n}}\\
&\quad + \int_{\Rn} \bigg(\int_{z:|y| > \frac12 \dist(z,R)}|\psi_t(x-y-z)| |f(z)| d\sigma(z) \bigg)^2 \Big(\frac{t}{t+|y|}\Big)^{n\lambda} \frac{dy}{t^{n}} \\
&:= \mathcal{E}_1 + \mathcal{E}_2.
\end{align*}

For $(x,t) \in W_R$, and $z \not\in S$, we have
$$ |\psi_t(x-y-z)| \lesssim \frac{t^\alpha}{(t+|x-y-z|)^{n+\alpha}}
\lesssim \frac{\ell(R)^\alpha}{(\ell(R)+|x-y-z|)^{n+\alpha}}.$$

If $|y| \leq \frac12 \dist(z,R)$, $|x-y-z| \geq |x-z|-|y| \geq \frac12 \dist(z,R)$. Then by $(\ref{d(z,R)-d(z,K)})$
\begin{align*}
|\psi_t(x-y-z)|
\lesssim \frac{\ell(R)^\alpha}{(\ell(R)+ \dist(z,R))^{n+\alpha}}
\lesssim \bigg( \frac{\ell(R)}{\ell(K)} \bigg)^{\alpha/2} \frac{\ell(K)^\alpha}{(\ell(K)+ \dist(z,K))^{n+\alpha}}.
\end{align*}
Hence, there holds
\begin{align*}
\mathcal{E}_1
&\lesssim \bigg(\frac{\ell(R)}{\ell(K)}\bigg)^\alpha \int_{\Rn} \bigg(\int_{\Rn} \frac{\ell(K)^\alpha}{(\ell(K)+ \dist(z,K))^{n+\alpha}} |f(z)| d\sigma(z) \bigg)^2 \Big(\frac{t}{t+|y|}\Big)^{n\lambda} \frac{dy}{t^{n}} \\
&\lesssim \bigg(\frac{\ell(R)}{\ell(K)}\bigg)^\alpha \mathcal{P}_\alpha(K,|f|\sigma)^2 .
\end{align*}

If $|y| > \frac12 dist(z,R)$, the inequality $(\ref{d(z,R)-d(z,K)})$ and Young's inequality imply that
\begin{align*}
\mathcal{E}_2
&\lesssim t^n \int_{\Rn} \bigg(\int_{z:|y| > \frac12 \dist(z,R)}|\psi_t(x-y-z)| \frac{\ell(R)^{\frac{n\lambda}{2}-n}}
{(\ell(R)+ \dist(z,R))^{\frac{n\lambda}{2}}} |f(z)| d\sigma(z) \bigg)^2 dy \\
&\leq t^n \int_{\Rn} \bigg(\int_{\Rn}|\psi_t(x-y-z)| \frac{\ell(R)^\alpha}{(\ell(R)+ \dist(z,R))^{n+\alpha}} |f(z)| d\sigma(z)\bigg)^2 dy \\
&\lesssim t^n \bigg(\frac{\ell(R)}{\ell(K)}\bigg)^\alpha \int_{\Rn} \bigg(\int_{\Rn}|\psi_t(x-y-z)| \frac{\ell(K)^\alpha}{(\ell(K)+ \dist(z,K))^{n+\alpha}} |f(z)| d\sigma(z)\bigg)^2 dy \\
&\leq t^n \bigg(\frac{\ell(R)}{\ell(K)}\bigg)^\alpha \big\| \psi_t \big\|_{L^2(\Rn)}^2 \bigg(\int_{\Rn} \frac{\ell(K)^\alpha}{(\ell(K)+ \dist(z,K))^{n+\alpha}} |f(z)| d\sigma(z) \bigg)^2 \\
&\lesssim \bigg(\frac{\ell(R)}{\ell(K)}\bigg)^\alpha \mathcal{P}_\alpha(K,|f|\sigma)^2.
\end{align*}
Consequently, the inequality $(\ref{R-K-S})$ is concluded from the above estimates.
\end{proof}
%%%%%%%%%%%%%%%%%%%%%%%%%%%%%%%%%%%%%%%%%%%%%%%%%%%%%%%%%%%%%%%%%%%%%%%%%%
\section{The Sufficiency in The Main Theorem}\label{Sec-Sufficiency}
In this section, we undertake to prove the sufficiency. We shall divide the collection
$\{Q; Q \in \mathcal{D}_{good}, \ell(Q) \leq 2^s\}$ into the following four parts. The last one is the core and quite complicated.
%%%%%%%%%%%%%%%%%%%%%%%%%%%%%%%%%%%%%%%%%%%%%
\subsection{The Case $\ell(Q) < \ell(R)$.}
In this case, we must have $\ell(Q) < 2^s$. It follows from $(\ref{Key Estimate-2})$ and Lemma $\ref{A-alpha-QR}$ that
\begin{align*}
&\sum_{\substack{R \in \mathcal{D}_{good} \\ \ell(R) \leq 2^s}} \iint_{W_R} \int_{\Rn} \Big|\sum_{\substack{Q \in \mathcal{D}_{good} \\ \ell(Q) < \ell(R)}} \psi_t*(\Delta_Q^{\sigma}f \cdot \sigma)(x-y)\Big|^2  \Big(\frac{t}{t+|y|}\Big)^{n\lambda} \frac{dy}{t^{n}} w dx \frac{dt}{t} \\
&\leq \sum_{\substack{R \in \mathcal{D}_{good} \\ \ell(R) \leq 2^s}} \iint_{W_R}
\bigg[\sum_{\substack{Q \in \mathcal{D}_{good} \\ \ell(Q) < \ell(R)}} \bigg(\int_{\Rn} |\psi_t*(\Delta_Q^{\sigma}f \cdot \sigma)(x-y)|^2 \Big(\frac{t}{t+|y|}\Big)^{n\lambda}\frac{dy}{t^n}\bigg)^{1/2}\bigg]^2 w dx \frac{dt}{t} \\
&\lesssim \sum_{\substack{R \in \mathcal{D}_{good} \\ \ell(R) \leq 2^s}} \iint_{W_R}
\bigg[\sum_{\substack{Q \in \mathcal{D}_{good} \\ \ell(Q) < \ell(R)}} \frac{\ell(Q)^{\alpha/2}\ell(R)^{\alpha/2}}
{(\ell(R)+d(Q,R))^{n+\alpha}} \sigma(Q)^{1/2} \big\| \Delta_Q^{\sigma}f \big\|_{L^2(\sigma)}\bigg]^2 w dx \frac{dt}{t} \\
&\lesssim \sum_{R \in \mathcal{D}_{good}}
\bigg(\sum_{Q \in \mathcal{D}_{good}} A_{QR}^\alpha \big\| \Delta_Q^{\sigma}f \big\|_{L^2(\sigma)}\bigg)^2
\lesssim {\mathscr{A}^2} \big\| f \big\|_{L^2(\sigma)}^2 .
\end{align*}
%%%%%%%%%%%%%%%%%%%%%%%%%%%%%%%%%%%%%%%%%%%%
\subsection{The Case $\ell(Q) \geq \ell(R)$ and $d(Q,R) > \ell(R)^{\gamma} \ell(Q)^{1-\gamma}$.}
We claim that  there holds in this case
\begin{equation}\label{ell-d-D}
\frac{\ell(R)^\alpha}{(\ell(R)+d(Q,R))^{n+\alpha}}
\lesssim \frac{\ell(Q)^{\alpha/2} \ell(R)^{\alpha/2}}{D(Q,R)^{n+\alpha}}.
\end{equation}
Indeed, if $\ell(Q) \leq d(Q,R)$, it is obvious that
\begin{align*}
\frac{\ell(R)^\alpha}{(\ell(R)+d(Q,R))^{n+\alpha}}
\lesssim \frac{\ell(R)^\alpha}{D(Q,R)^{n+\alpha}}
\leq \frac{\ell(Q)^{\alpha/2} \ell(R)^{\alpha/2}}{D(Q,R)^{n+\alpha}}.
\end{align*}
If $\ell(Q) > d(Q,R)$, then $D(Q,R) \simeq \ell(Q)$. Using $d(Q,R) > \ell(R)^{\gamma} \ell(Q)^{1-\gamma}$ and
$\gamma = \frac{\alpha}{2(n+\alpha)}$, we obtain that
\begin{align*}
\frac{\ell(R)^\alpha}{(\ell(R)+d(Q,R))^{n+\alpha}}
\leq \frac{\ell(R)^\alpha}{d(Q,R)^{n+\alpha}}
\leq \frac{\ell(Q)^{\alpha/2} \ell(R)^{\alpha/2}}{\ell(Q)^{n+\alpha}}
\simeq \frac{\ell(Q)^{\alpha/2} \ell(R)^{\alpha/2}}{D(Q,R)^{n+\alpha}}.
\end{align*}

Then Lemma $\ref{A-alpha-QR}$ and the inequalities $(\ref{Key Estimate-1})$, $(\ref{ell-d-D})$ give that
\begin{align*}
&\sum_{\substack{R \in \mathcal{D}_{good} \\ \ell(R) \leq 2^s}} \iint_{W_R} \int_{\Rn} \Big|\sum_{\substack{Q \in \mathcal{D}_{good}:\ell(Q) \geq \ell(R) \\ d(Q,R) > \ell(R)^{\gamma} \ell(Q)^{1-\gamma}}} \psi_t*(\Delta_Q^{\sigma}f \cdot \sigma)(x-y)\Big|^2  \Big(\frac{t}{t+|y|}\Big)^{n\lambda} \frac{dy}{t^{n}} w dx \frac{dt}{t} \\
&\leq \sum_{\substack{R \in \mathcal{D}_{good} \\ \ell(R) \leq 2^s}} \iint_{W_R}
\bigg[\sum_{\substack{Q \in \mathcal{D}_{good}:\ell(Q) \geq \ell(R) \\ d(Q,R) > \ell(R)^{\gamma} \ell(Q)^{1-\gamma}}} \bigg(\int_{\Rn} |\psi_t*(\Delta_Q^{\sigma}f \cdot \sigma)(x-y)|^2 \Big(\frac{t}{t+|y|}\Big)^{n\lambda}\frac{dy}{t^n}\bigg)^{1/2}\bigg]^2 w dx \frac{dt}{t} \\
&\lesssim \sum_{\substack{R \in \mathcal{D}_{good} \\ \ell(R) \leq 2^s}} \iint_{W_R}
\bigg[\sum_{\substack{Q \in \mathcal{D}_{good}:\ell(Q) \geq \ell(R) \\ d(Q,R) > \ell(R)^{\gamma} \ell(Q)^{1-\gamma}}}  \frac{\ell(R)^\alpha}{(\ell(R)+d(Q,R))^{n+\alpha}} \sigma(Q)^{1/2} \big\| \Delta_Q^{\sigma}f \big\|_{L^2(\sigma)}\bigg]^2
w dx \frac{dt}{t} \\
&\lesssim \sum_{R \in \mathcal{D}_{good}}
\bigg(\sum_{Q \in \mathcal{D}_{good}} A_{QR}^{\alpha} \big\| \Delta_Q^{\sigma}f \big\|_{L^2(\sigma)}\bigg)^2
\lesssim {\mathscr{A}^2 }\big\| f \big\|_{L^2(\sigma)}^2 .
\end{align*}
%%%%%%%%%%%%%%%%%%%%%%%%%%%%%%%%%%%%%%%%%%%%%
\subsection{The Case $\ell(R) \leq \ell(Q) \leq 2^r \ell(R)$ and $d(Q,R) \leq \ell(R)^{\gamma} \ell(Q)^{1-\gamma}$.}
In this case, it is trivial that $D(Q,R) \simeq \ell(Q) \simeq \ell(R)$.
Thus
$$
\frac{\ell(R)^\alpha}{(\ell(R)+d(Q,R))^{n+\alpha}}
\leq \ell(R)^{-n} \simeq \frac{\ell(Q)^{\alpha/2} \ell(R)^{\alpha/2}}{D(Q,R)^{n+\alpha}} .
$$
Then proceeding as we did in the previous subsection, we obtain that
\begin{align*}
&\sum_{\substack{R \in \mathcal{D}_{good} \\ \ell(R) \leq 2^s}} \iint_{W_R} \int_{\Rn} \Big|\sum_{\substack{\ell(R) \leq \ell(Q) \leq 2^r \ell(R) \\ d(Q,R) \leq \ell(R)^{\gamma} \ell(Q)^{1-\gamma}}} \psi_t*(\Delta_Q^{\sigma}f \cdot \sigma)(x-y)\Big|^2 \Big(\frac{t}{t+|y|}\Big)^{n\lambda}\frac{dy}{t^{n}} w dx \frac{dt}{t} \\
&\lesssim \sum_{\substack{R \in \mathcal{D}_{good} \\ \ell(R) \leq 2^s}} \iint_{W_R}
\bigg[\sum_{\substack{\ell(R) \leq \ell(Q) \leq 2^r \ell(R) \\ d(Q,R) \leq \ell(R)^{\gamma} \ell(Q)^{1-\gamma}}}  \frac{\ell(R)^\alpha}{(\ell(R)+d(Q,R))^{n+\alpha}} \sigma(Q)^{1/2} \big\| \Delta_Q^{\sigma}f \big\|_{L^2(\sigma)}\bigg]^2
w dx \frac{dt}{t} \\
&\lesssim \mathscr{A}^2 \big\| f \big\|_{L^2(\sigma)}^2 .
\end{align*}

%%%%%%%%%%%%%%%%%%%%%%%%%%%%%%%%%%%%%%%%%%%%%
\subsection{The Case $\ell(Q) > 2^r \ell(R)$ and $d(Q,R) \leq \ell(R)^{\gamma} \ell(Q)^{1-\gamma}$.}
We call $R^{(k)}$ as the $k$ generations older dyadic ancestor of $R$. In this case, since $R$ is good, it must actually have
$R \subset Q$. That is, $Q$ is the ancestor of $R$. Then we can write
\begin{align*}
&\sum_{\substack{R \in \mathcal{D}_{good} \\ \ell(R) \leq 2^s}} \iint_{W_R} \int_{\Rn} \Big|
 \sum_{\substack{2^s \geq \ell(Q) > 2^r \ell(R) \\ d(Q,R) \leq \ell(R)^{\gamma} \ell(Q)^{1-\gamma}}}
\psi_t*(\Delta_Q^{\sigma}f \cdot \sigma)(x-y)\Big|^2 \Big(\frac{t}{t+|y|}\Big)^{n\lambda}\frac{dy}{t^{n}} w dx \frac{dt}{t} \\
&=\sum_{\substack{R \in \mathcal{D}_{good} \\ \ell(R) \leq 2^{s-r-1}}} \iint_{W_R} \int_{\Rn} \Big|\sum_{k=r+1}^{s-\log_2 \ell(R)} \psi_t*(\Delta_{R^{(k)}}^{\sigma}f \cdot \sigma)(x-y)\Big|^2 \Big(\frac{t}{t+|y|}\Big)^{n\lambda}\frac{dy}{t^{n}} w dx \frac{dt}{t} \\
&\leq \sum_{\substack{R \in \mathcal{D}_{good} \\ \ell(R) \leq 2^{s-r-1}}} \iint_{W_R} \int_{\Rn} \Big|\sum_{k=r+1}^{s-\log_2 \ell(R)} \psi_t*((\mathbf{1}_{R^{(k)}\setminus R^{(k-1)}}\Delta_{R^{(k)}}^{\sigma}f)\sigma)(x-y)\Big|^2 \Big(\frac{t}{t+|y|}\Big)^{n\lambda}\frac{dy}{t^{n}} w dx \frac{dt}{t} \\
&\quad + \sum_{\substack{R \in \mathcal{D}_{good} \\ \ell(R) \leq 2^{s-r-1}}} \iint_{W_R} \int_{\Rn} \Big|\sum_{k=r+1}^{s-\log_2 \ell(R)} \psi_t*((\mathbf{1}_{R^{(k-1)}}\Delta_{R^{(k)}}^{\sigma}f)\sigma)(x-y)\Big|^2 \Big(\frac{t}{t+|y|}\Big)^{n\lambda}\frac{dy}{t^{n}} w dx \frac{dt}{t} \\
&:= \mathcal{J} + \mathcal{K}.
\end{align*}
Fix the summing variable $k \geq r+1$. Then, the inequality $(\ref{R-K-S})$ implies that
\begin{align*}
&\sum_{\substack{R \in \mathcal{D}_{good} \\ \ell(R) \leq 2^{s-r-1}}} \iint_{W_R} \int_{\Rn} \Big|\psi_t*((\mathbf{1}_{R^{(k)}\setminus R^{(k-1)}}\Delta_{R^{(k)}}^{\sigma}f)\sigma)(x-y)\Big|^2 \Big(\frac{t}{t+|y|}\Big)^{n\lambda}\frac{dy}{t^{n}} w dx \frac{dt}{t} \\
&\lesssim 2^{-k \alpha} \sum_{\substack{R \in \mathcal{D}_{good} \\ \ell(R) \leq 2^{s-r-1}}} \mathcal{P}_\alpha(R^{(k)},|\Delta_{R^{(k)}}^{\sigma}f|\sigma)^2 w(R) \\
&\lesssim 2^{-k \alpha} \sum_{I} \big\| \Delta_{I}^{\sigma}f \big\|_{L^2(\sigma)}^2 \frac{\sigma(I)}{|I|} \frac{w(I)}{|I|}
\lesssim 2^{-k \alpha} \mathscr{A}^2 \big\| f \big\|_{L^2(\sigma)}^2,
\end{align*}
where we reindexed the sum over $R$ above. By the geometric decay in $k$, we deduce
$$ \mathcal{J} \lesssim \mathscr{A}^2 \big\| f \big\|_{L^2(\sigma)}^2. $$

It remains only to analyze the contribution made to $\mathcal{K}$ by the term$(\Delta_{R^{(k)}}^{\sigma}f) \mathbf{1}_{R^{(k-1)}}$. Our goal is to prove
\begin{equation}\label{estimate-K}
 \mathcal{K} \lesssim (\mathscr{A} + \mathscr{B})^2 \big\| f \big\|_{L^2(\sigma)}^2.
\end{equation}
To finish this, we here need an extra concept : $Stopping \ cubes$. For more applications and consequences associated with stopping cubes,  we  refer readers to the works \cite{Lacey1} ,\cite{Lacey2}, \cite{LSTS}.
The following argument is essentially taken from \cite{LL}.
%%%%%%%%%%%%%%%%%%%%%%%%%%%%%%%%%%%%%

\vspace{0.4cm}
\noindent\textbf{Stopping Cubes.}
We make the following construction of stopping cubes $\mathcal{S}$.
Let $\mathcal{D}_f$ be the dyadic children of good cubes $I \subset Q^0$ with $\log_2 \ell(I) = r' \text{ mod } r + 1$, where the integer $0 \leq r' < r+1$.
Set $\mathcal{S}_0$ to be all the maximal dyadic children of $Q_0$, which are in $\mathcal{D}_f$. Then set $\tau(S) = \mathbb{E}_S^{\sigma}f$, for $S \in \mathcal{S}_0$. In the recursive step, assuming that $\mathcal{S}_k$ is constructed, for $S \in \mathcal{S}_k$, set $\text{ch}_{\mathcal{S}}(S)$ to be the maximal subcubes $I \subset S$, $I \in \mathcal{D}_f$, such that either
\begin{enumerate}
\item [(a)] $\mathbb{E}_I^{\sigma}|f| > 2 \tau(S)$;
\item [(b)] The first condition fails, and
           $\sum_{K \in \mathcal{W}_I} \mathcal{P}_\alpha(K,\mathbf{1}_S \sigma)^2 w(K) \geq C_0 \mathscr{P}^2 \sigma(I)$.
\end{enumerate}
Then, define $\mathcal{S}_{k+1} := \bigcup_{S \in \mathcal{S}_k} \text{ch}_{\mathcal{S}}(S)$,
and for any $\dot{S} \in \text{ch}_{\mathcal{S}}(S)$
\begin{equation*}
\tau(\dot{S}) :=
\begin{cases}
\ \mathbb{E}_{\dot{S}}^{\sigma}|f| \ \ & \mathbb{E}_{\dot{S}}^{\sigma}|f| > 2 \tau(S), \\
\ \tau(S)   \ \ &\text{otherwise } .
\end{cases}
\end{equation*}
Finally, $\mathcal{S} := \bigcup_{k=0}^\infty \mathcal{S}_k$. Note that $\ell(\dot{S}) \leq 2^{-r-1}\ell(S)$ for all $\dot{S} \in \text{ch}_{\mathcal{S}}(S)$. In particular, it follows that
\begin{equation}\label{S-1-K}
\dot{S}^{(1)} \subset K, \ \text{for \ some} \ K \in \mathcal{W}_S.
\end{equation}
This holds since $\dot{S}^{(1)}$ is good, and strongly contained in $S$, so that Proposition $\ref{overlap}$ gives the implication above.

\vspace{0.4cm}
\noindent\textbf{Notation.}
For any dyadic cube $I$, $S(I)$ will denote its father in $\mathcal{S}$, the minimal cube in $\mathcal{S}$ that contains it. Note that there maybe the case $S(I) = I$. For any stopping cube $S$, $\mathscr{F}(S)$ will denote its father in the stopping tree, inductively, $\mathscr{F}^{k+1}S = \mathscr{F}(\mathscr{F}^k S)$.

The construction enjoys the following properties, which were proved in \cite{LSTS}.
\begin{lemma}\label{bound}
The following statements hold.
\begin{enumerate}
\item [(i)] For all cubes $I$, $|\mathbb{E}_{I}^{\sigma}f| \lesssim \tau(S(I))$.
\item [(ii)] The quasi-orthogonality bound holds :
\begin{equation}\label{quasi-orth}
\sum_{S \in \mathcal{S}} \tau(S)^2 \sigma(S) \lesssim \big\| f \big\|_{L^2(\sigma)}^2.
\end{equation}
\end{enumerate}
\end{lemma}

Applying the tool of stopping cubes, we can make the following decomposition.

\begin{equation}\aligned\label{stopping decomposition}
&\sum_{k=r+1}^{s-\log_2 \ell(R)}\mathbf{1}_{R^{(k-1)}} \Delta_{R^{(k)}}^{\sigma}f
=\sum_{k=r+1}^{s-\log_2 \ell(R)}(\mathbb{E}_{R^{(k-1)}}^{\sigma} \Delta_{R^{(k)}}^{\sigma}f) \mathbf{1}_{R^{(k-1)}} \\
&= \sum_{m=1}^{\infty} \sum_{k=r+1}^{s-\log_2 \ell(R)} \mathbf{1}_{\mathscr{F}^m S(R^{(r)}) \subset S(R^{(k-1)})} (\mathbb{E}_{R^{(k-1)}}^{\sigma} \Delta_{R^{(k)}}^{\sigma}f) \mathbf{1}_{\mathscr{F}^mS(R^{(r)}) \setminus \mathscr{F}^{m-1}S(R^{(r)})} \\
&\quad + \sum_{k=r+1}^{s-\log_2 \ell(R)}(\mathbb{E}_{R^{(k-1)}}^{\sigma} \Delta_{R^{(k)}}^{\sigma}f) \mathbf{1}_{S(R^{(r)})}
 - \sum_{k=r+1}^{s-\log_2 \ell(R)}(\mathbb{E}_{R^{(k-1)}}^{\sigma} \Delta_{R^{(k)}}^{\sigma}f) \mathbf{1}_{S(R^{(k-1)}) \setminus R^{(k-1)}}.
\endaligned
\end{equation}
Now, we are in the position to consider the contribution of $\mathcal{K}$, which is defined in the beginning of 5.4. Recall that
$$
\mathcal{K} = \sum_{\substack{R \in \mathcal{D}_{good} \\ \ell(R) \leq 2^{s-r-1}}} \iint_{W_R} \int_{\Rn} \Big|\sum_{k=r+1}^{s-\log_2 \ell(R)} \psi_t*((\mathbf{1}_{R^{(k-1)}}\Delta_{R^{(k)}}^{\sigma}f)\sigma)(x-y)\Big|^2 \Big(\frac{t}{t+|y|}\Big)^{n\lambda}\frac{dy}{t^{n}} w dx \frac{dt}{t} .
$$
Thus, $\mathcal{K}$ is bounded by corresponding three parts, which are written as $\mathcal{K}_{\text{Glo}}$, $\mathcal{K}_{\text{Par}}$ and $\mathcal{K}_{\text{Loc}}$ respectively. We next shall estimate each one successively.

%%%%%%%%%%%%%%%%%%%%%%%%%%%%%%%%
\vspace{0.5cm}
\noindent\textbf{$\bullet$ The Global Part.}
First, we analyze the first term on the right of $(\ref{stopping decomposition})$. It is worth noting that reindexing the corresponding sum is crucial to our estimates. To do this, we here borrow an idea from \cite{LL}.

Fix a stopping cube $S$ and integer $m$. Consider $\ddot{S} \in \mathcal{S}$, and split integer $m = p + q$, where $p = \lceil m/2 \rceil$. Consider the sub-partition of $\ddot{S}$ given by
$\mathcal{P}(m,\ddot{S}) = \{\dot{S} \in \mathcal{S }: \mathscr{F}^p \dot{S} = \ddot{S}\}$.
Now, for stopping cube $S$ with $\mathscr{F}^q S = \dot{S}$ , and good $R \Subset \dot{S}$, we have $R \subset \dot{K}$ for some
$\dot{K} \in \mathcal{W}_{\dot{S}}$ , where $\dot{S} \in \mathcal{P}(m,\ddot{S})$. Note that we have
$R \subset \dot{K} \subset \dot{S}$. It follows from the goodness of $R$ that he assumption of of Lemma $\ref{lemma-R-K-S}$ holds for these three intervals.
The above argument is saying that
$$
\bigcup_{\dot{S} \in \mathcal{P}(m,\ddot{S})} \bigcup_{\substack{R:good, R \Subset \dot{S} \\ \mathscr{F}^q S = \dot{S}}}R
\subset \bigcup_{\dot{S} \in \mathcal{P}(m,\ddot{S})} \bigcup_{\dot{K} \in \mathcal{W}_{\dot{S}}} \bigcup_{R:R \subset \dot{K}}R .
$$
In addition, one can find a constant $|c| \lesssim 1$ such that
\begin{equation*}
\sum_{k=r+1}^{s-\log_2 \ell(R)} \mathbf{1}_{\mathscr{F}^m S \subset S(R^{(k-1)})} \mathbb{E}_{R^{(k-1)}}^{\sigma} \Delta_{R^{(k)}}^{\sigma}f
=c \cdot \tau(\mathscr{F}^m S) .
\end{equation*}

Thereby, for each $\ddot{S} \in \mathcal{S}$, using the above facts, we obtain
\begin{align*}
\Lambda(\ddot{S})&:=\sum_{\substack{R:\mathscr{F}^m S=\ddot{S} \\ S=S(R^{(r)})}}  \iint_{W_R} \int_{\Rn} \Big|\sum_{\substack{k=r+1 \\ \ddot{S} \subset S(R^{(k-1)})}}^{s-\log_2 \ell(R)} \mathbb{E}_{R^{(k-1)}}^{\sigma}\Delta_{R^{(k)}}^{\sigma}f \cdot
\psi_t*( \mathbf{1}_{\ddot{S} \setminus \mathscr{F}^{m-1}S} \sigma)(x-y)\Big|^2 \\
&\quad\quad\quad \times \Big(\frac{t}{t+|y|}\Big)^{n\lambda}dy \frac{w dx dt}{t^{n+1}} \\
&\lesssim \tau(\ddot{S})^2 \sum_{\dot{S} \in \mathcal{P}(m,\ddot{S})} \sum_{\dot{K} \in \mathcal{W}_{\dot{S}}}
\sum_{R: R \subset \dot{K}} \int_{\Rn} \iint_{W_R}|\psi_t*(\mathbf{1}_{\ddot{S} \setminus \mathscr{F}^{p-1}\dot{S}} \sigma)(x-y)|^2 \Big(\frac{t}{t+|y|}\Big)^{n\lambda} w dx \frac{dt dy}{t^{n+1}}.
\end{align*}
Furthermore, from $(\ref{R-K-S})$ and Proposition $\ref{best constant}$, it follows that
\begin{equation}\aligned\label{Lambda-S}
\Lambda(\ddot{S})&\lesssim \tau(\ddot{S})^2 \sum_{\dot{S} \in \mathcal{P}(m,\ddot{S})} \sum_{\dot{K} \in \mathcal{W}_{\dot{S}}}\mathcal{P}_\alpha(\dot{K},\mathbf{1}_{\ddot{S}} \sigma)^2 \sum_{R: R \subset \dot{K}} \bigg(\frac{\ell(R)}{\ell(\dot{K})}\bigg)^\alpha w(R) \\
&\lesssim 2^{-m \alpha /2} \tau(\ddot{S})^2 \sum_{\dot{S} \in \mathcal{P}(m,\ddot{S})} \sum_{\dot{K} \in \mathcal{W}_{\dot{S}}}\mathcal{P}_\alpha(\dot{K},\mathbf{1}_{\ddot{S}} \sigma)^2 w(\dot{K}) \\
&\lesssim 2^{-m \alpha /2} (\mathscr{A} + \mathscr{B})^2 \tau(\ddot{S})^2 \sigma(\ddot{S}).
\endaligned
\end{equation}

Now we turn our attention to bound $\mathcal{K}_{\text{Glo}}$. Making use of Cauchy-Schwartz inequality and $(\ref{Lambda-S})$, we deduce that
\begin{align*}
\mathcal{K}_{\text{Glo}}&=\sum_{\substack{R \in \mathcal{D}_{good} \\ \ell(R) \leq 2^{s-r-1}}} \iint_{W_R} \int_{\Rn} \Big|\sum_{m=1}^\infty 2^{-m \alpha/8} 2^{m \alpha/8} \sum_{\substack{k=r+1 \\ \mathscr{F}^m S(R^{(r)}) \subset S(R^{(k-1)})}}^{s-\log_2 \ell(R)} \mathbb{E}_{R^{(k-1)}}^{\sigma}\Delta_{R^{(k)}}^{\sigma}f \\
&\quad\quad \times \psi_t*(\mathbf{1}_{\mathscr{F}^mS(R^{(r)}) \setminus \mathscr{F}^{m-1}S(R^{(r)})} \sigma)(x-y)\Big|^2 \Big(\frac{t}{t+|y|}\Big)^{n\lambda} w dx \frac{dt dy}{t^{n+1}} \\
&\lesssim \sum_{m=1}^\infty 2^{m \alpha/4} \sum_{\substack{R \in \mathcal{D}_{good} \\ \ell(R) \leq 2^{s-r-1}}} \iint_{W_R} \int_{\Rn} \Big| \sum_{k=r+1}^{s-\log_2 \ell(R)} \mathbb{E}_{R^{(k-1)}}^{\sigma}\Delta_{R^{(k)}}^{\sigma}f \\
&\quad\quad \times \psi_t*(\mathbf{1}_{\mathscr{F}^mS(R^{(r)}) \setminus \mathscr{F}^{m-1}S(R^{(r)})} \sigma)(x-y)\Big|^2 \Big(\frac{t}{t+|y|}\Big)^{n\lambda} w dx \frac{dt dy}{t^{n+1}} \\
&\leq \sum_{m=1}^\infty 2^{m \alpha/4} \sum_{\ddot{S} \in \mathcal{S}} \Lambda(\ddot{S})
\lesssim (\mathscr{A} + \mathscr{B})^2 \sum_{m=1}^\infty 2^{-m \alpha /4} \sum_{\ddot{S} \in \mathcal{S}} \tau(\ddot{S})^2 \sigma(\ddot{S})
\lesssim (\mathscr{A} + \mathscr{B})^2 \big\| f \big\|_{L^2(\sigma)}^2,
\end{align*}
where in the last step, we used the quasi-orthogonality bound $(\ref{quasi-orth})$.

Let us next explain how to obtain the geometric factor in $(\ref{Lambda-S})$. We can assume that $q > 2$. Now, $S(R) \subset S$ and
$\mathscr{F}^q S = \dot{S}$. Write the stopping cubes between $S$ and $\dot{S}$ as
$$R \subset S = S_1 \subsetneq S_2 \subsetneq \cdots \subsetneq S_q := \dot{S},\ \ S_t \in \mathcal{S},\ 1 \leq t \leq q.$$
Observing $(\ref{S-1-K})$, we have $S_{q-1} \subset \dot{K}$, for $\dot{K} \in \mathcal{W}_{\dot{S}}$ as above. Then, we have
$\ell(R) \leq 2^{-q+1}\ell(\dot{K})$. Since $q \simeq m/2$, we obtain the geometric decay in $m$ above.

%%%%%%%%%%%%%%%%%%%%%%%%%%%%%%%%%%%%%%%%%%%%%
\vspace{0.5cm}
\noindent\textbf{$\bullet$ The Paraproduct Estimate.}
Next, we bound the second term on the right of $(\ref{stopping decomposition})$. It is worth noting that the sum over the martingale differences is controlled by the stopping value $\tau(S)$. That is,
\begin{align*}
\Big|\sum_{k=r+1}^{s-\log_2 \ell(R)} \mathbb{E}_{R^{(k-1)}}^{\sigma}\Delta_{R^{(k)}}^{\sigma}f \Big|
=\big| \mathbb{E}_{R^{(r)}}^{\sigma} f \big|
\lesssim \tau(R^{(r)}).
\end{align*}

Therefore, for fixed $S \in \mathcal{S}$, the testing condition $(\ref{testing condition-g^star})$ implies that
\begin{align*}
\Gamma(S)&:=
\sum_{\substack{R:\ell(R) \leq 2^{s-r-1} \\ S(R^{(r)})=S}}  \iint_{W_R} \int_{\Rn} \Big|\sum_{k=r+1}^{s-\log_2 \ell(R)} \psi_t*(\mathbb{E}_{R^{(k-1)}}^{\sigma}\Delta_{R^{(k)}}^{\sigma}f \cdot \mathbf{1}_S \sigma)(x-y)\Big|^2 \Big(\frac{t}{t+|y|}\Big)^{n\lambda}\frac{dy}{t^{n}} w dx \frac{dt}{t} \\
&\lesssim \tau(S)^2 \int_{\Rn} \iint_{\widehat{S}}|\psi_t*(\mathbf{1}_S \sigma)(x-y)|^2 \Big(\frac{t}{t+|y|}\Big)^{n\lambda} w dx \frac{dt dy}{t^{n+1}}
\lesssim \mathscr{B}^2 \tau(S)^2 \sigma(S).
\end{align*}

Accordingly, by the quasi-orthogonality bound $(\ref{quasi-orth})$, it yields that
\begin{align*}
\mathcal{K}_{\text{Par}}
\lesssim \sum_{S \in \mathcal{S}} \Gamma(S)
\lesssim \mathscr{B}^2 \sum_{S \in \mathcal{S}} \tau(S)^2 \sigma(S)
\lesssim \mathscr{B}^2 \big\|f\big\|_{L^2(\sigma)}^2.
\end{align*}
%%%%%%%%%%%%%%%%%%%%%%%%%%%%%%%%%%%%%

\vspace{0.5cm}
\noindent\textbf{$\bullet$ The Local Bound.}
Finally, let us estimate the third term on the right of $(\ref{stopping decomposition})$. Fix an integer $k \geq r+1$ and fix a (good) cube $\dot{R}$, and child $\ddot{R}$ of $\dot{R}$. Observe that if $\ddot{R}=R^{(k-1)}$, then $R \Subset \ddot{R}$. Then by Proposition $\ref{overlap} ~ (1)$, there exists a cube $K \in \mathcal{W}_{\ddot{R}}$ such that $R \subset K$.

For such $K$, from $(\ref{R-K-S})$, it follows that
\begin{align*}
\Theta(\ddot{R},K)&:=
\sum_{\substack{R: R \subset K \\ R^{(k-1)}=\ddot{R}}} \iint_{W_R} \int_{\Rn} |\psi_t*(\mathbf{1}_{S(\ddot{R}) \setminus \ddot{R}} \sigma)(x-y)|^2 \Big(\frac{t}{t+|y|}\Big)^{n\lambda}\frac{dy}{t^{n}} w dx \frac{dt}{t} \\
&\lesssim 2^{-k \alpha /2} \mathcal{P}_\alpha(K,\mathbf{1}_{S(\ddot{R})} \sigma)^2 w(K).
\end{align*}

We will see that the stopping rule on the pivotal condition plays an important role. Indeed, if $\ddot{R}$ is a stopping cube, then
$S(\ddot{R})=\ddot{R}$. Hence, stopping cube $\ddot{R}$ does not contribute at all to our summation. This leads us to only consider
non-stopping cube $\ddot{R}$ below.

Because $\ddot{R}$ is not a stopping cube, it must fail the pivotal stopping condition. Then by Proposition $\ref{best constant}$,
we have that
\begin{align*}
\sum_{K \in \mathcal{W}_{\ddot{R}}} \Theta(\ddot{R},K)
\lesssim 2^{-k \alpha /2} \sum_{K \in \mathcal{W}_{\ddot{R}}} \mathcal{P}_\alpha(K,\mathbf{1}_{S(\ddot{R})} \sigma)^2 w(K)
\lesssim 2^{-k \alpha /2} (\mathscr{A} + \mathscr{B})^2  \sigma(\ddot{R}).
\end{align*}
Therefore, together with the above estimates, it yields that
\begin{align*}
\mathcal{K}_{\text{Loc}}
&=\sum_{\substack{R \in \mathcal{D}_{good} \\ \ell(R) \leq 2^{s-r-1}}} \iint_{W_R} \int_{\Rn} \Big| \sum_{k=r+1}^{s-\log_2 \ell(R)} 2^{-k \alpha/6} 2^{k \alpha/6}\mathbb{E}_{R^{(k-1)}}^{\sigma}\Delta_{R^{(k)}}^{\sigma}f \psi_t*(\mathbf{1}_{S(R^{(k-1)}) \setminus R^{(k-1)}} \sigma)(x-y)\Big|^2 ... \\
&\lesssim \sum_{k=r+1}^{s-\log_2 \ell(R)} 2^{k \alpha/3} \sum_{\substack{R \in \mathcal{D}_{good} \\ \ell(R) \leq 2^{s-r-1}}}
\big| \mathbb{E}_{R^{(k-1)}}^{\sigma}\Delta_{R^{(k)}}^{\sigma}f \big|^2 \iint_{W_R} \int_{\Rn} |\psi_t*(\mathbf{1}_{S(R^{(k-1)}) \setminus R^{(k-1)}} \sigma)(x-y)|^2 ... \\
&\leq \sum_{k=r+1}^{s-\log_2 \ell(R)} 2^{k \alpha/3} \sum_{\ddot{R} \notin \mathcal{S}}
\big| \mathbb{E}_{\ddot{R}}^{\sigma}\Delta_{\dot{R}}^{\sigma}f \big|^2 \sum_{K \in \mathcal{W}_{\ddot{R}}} \Theta(\ddot{R},K) \\
&\lesssim (\mathscr{A} + \mathscr{B})^2 \sum_{k=r+1}^{s-\log_2 \ell(R)} 2^{-k \alpha/6} \sum_{\ddot{R} \notin \mathcal{S}} \big| \mathbb{E}_{\ddot{R}}^{\sigma}\Delta_{\dot{R}}^{\sigma}f \big|^2 \sigma(\ddot{R})\\
&\lesssim  (\mathscr{A} + \mathscr{B})^2 \big\| f \big\|_{L^2(\sigma)}^2.
\end{align*}

So far, we have proved $(\ref{estimate-K})$. Consequently, we complete the proof of sufficiency in Theorem $\ref{Theorem g^star}$.

\qed
%%%%%%%%%%%%%%%%%%%%%%%%%%%%%%%%%%%%%%%%%%%%%%%%%%%%%%%%%%%%%%%%%%%%%%%%%%%%%
\subsection*{Acknowledgements}
The authors want to express their sincerely thanks to the referee
for his or her valuable remarks and suggestions which made this
paper more readable.

\end{document}